\newtheorem{theorem}{Theorem}[section]
\newtheorem{corollary}[theorem]{Corollary}
\newtheorem{lemma}[theorem]{Lemma}
\newtheorem{proposition}[theorem]{Proposition}
\theoremstyle{definition}
\newtheorem{definition}[theorem]{Definition}
\newtheorem{remark}[theorem]{Remark}
\newtheorem{example}[theorem]{Example}
\numberwithin{equation}{section}
\newcommand{\N}{\mathbb{N}}
\newcommand{\R}{\mathbb{R}}
\newcommand{\Q}{\mathbb{Q}}
\newcommand{\sub}{\subseteq}
\newcommand{\ssub}{\sqsubseteq}
\newcommand{\e}{\epsilon}
\renewcommand{\d}{\delta}
\newcommand{\es}{\emptyset}
\newcommand{\sT}{\mathcal{T}}
\newcommand{\sC}{\mathcal{C}}
\newcommand{\supp}{\operatorname{supp}}
\begin{document}

\baselineskip=17pt


\title[Good Sequences]{Schauder bases having many good block basic sequences}

\author[C. A. Krause]{Cory A. Krause}
\address{LeTourneau University\\ 2100 S Mobberly\\
Longview, TX 75602\\
USA}
\email{corykrause@letu.edu}

\date{}

\begin{abstract}
In the study of asymptotic geometry in Banach spaces, a basic sequence which gives rise to a spreading model has been called a good sequence. It is well known that every normalized basic sequence in a Banach space has a subsequence which is good. We investigate the assumption that every normalized block tree relative to a basis has a branch which is good. This combinatorial property turns out to be very strong and is equivalent to the space being $1$-asymptotic $\ell_p$ for some $1\leq p\leq\infty$. We also investigate the even stronger assumption that every block basic sequence of a basis is good. Finally, using the Hindman-Milliken-Taylor theorem, we prove a stabilization theorem which produces a basic sequence all of whose normalized constant coefficient block basic sequences are good, and we present an application of this stabilization.
\end{abstract}

\subjclass[2010]{Primary 46B03, 46B06, 46B25, 46B45; Secondary 05D10}

\keywords{Banach spaces, spreading models, asymptotic geometry, Ramsey theory, Milliken-Taylor}

\maketitle

\section{Introduction and Preliminaries}
Let $A\sub\N$ be an infinite set and $[A]^\lambda$ represent the collection of all subsets of $A$ having size $\lambda$. A seminal combinatorial theorem of F. P. Ramsey from \cite{Ram} is the following:

\begin{theorem}[Ramsey]\label{Thrm:Ramsey}
	Let $k,r\in\N$. For all $f:[\N]^k\to\{1,\ldots,r\}$ there exists an $M\in[\N]^\omega$ such that $f$ restricted to $[M]^k$ is constant.
\end{theorem}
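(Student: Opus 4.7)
The plan is to prove Theorem \ref{Thrm:Ramsey} by induction on $k$, following the standard diagonal-selection argument, so that the inductive hypothesis handles homogenization for $(k-1)$-sets while a one-step pigeonhole handles the final color choice.

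For the base case $k=1$, the function $f:\N\to\{1,\ldots,r\}$ partitions $\N$ into $r$ pieces, and the infinite pigeonhole principle delivers an $M\in[\N]^\omega$ on which $f$ is constant. For the inductive step, assume the statement for $k-1$ and fix $f:[\N]^k\to\{1,\ldots,r\}$. I would then recursively build a decreasing chain $\N=A_0\supseteq A_1\supseteq A_2\supseteq\cdots$ of infinite subsets of $\N$ together with elements $n_1<n_2<\cdots$ and colors $c_1,c_2,\ldots\in\{1,\ldots,r\}$ as follows. Given $A_{i-1}$ infinite, let $n_i=\min A_{i-1}$ and consider the coloring $g_i:[A_{i-1}\setminus\{n_i\}]^{k-1}\to\{1,\ldots,r\}$ defined by $g_i(S)=f(\{n_i\}\cup S)$. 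By the inductive hypothesis applied to $g_i$ (after identifying $A_{i-1}\setminus\{n_i\}$ with $\N$), there is an infinite $A_i\sub A_{i-1}\setminus\{n_i\}$ on which $g_i$ is constantly equal to some $c_i\in\{1,\ldots,r\}$.

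Having constructed the sequence $(n_i)$ and colors $(c_i)$, I would apply the base case once more: the map $i\mapsto c_i$ is a coloring of $\N$ in $r$ colors, so there is an infinite $I\sub\N$ on which $c_i$ is constant, say with value $c$. I would then set $M=\{n_i:i\in I\}$. To verify that $f$ is constantly $c$ on $[M]^k$, take any $\{n_{i_1},\ldots,n_{i_k}\}\in[M]^k$ with $i_1<\cdots<i_k$; since $n_{i_2},\ldots,n_{i_k}\in A_{i_1}$ by construction, one has $f(\{n_{i_1},\ldots,n_{i_k}\})=g_{i_1}(\{n_{i_2},\ldots,n_{i_k}\})=c_{i_1}=c$.

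The only mild obstacle is bookkeeping: one must be careful that at each stage the set $A_{i-1}\setminus\{n_i\}$ used to invoke the inductive hypothesis is itself infinite, and that the nested construction guarantees $n_{i_2},\ldots,n_{i_k}\in A_{i_1}$ whenever $i_1<i_2<\cdots<i_k$. Both follow automatically from the choices $n_j=\min A_{j-1}$ and $A_j\sub A_{j-1}\setminus\{n_j\}$, so no deeper ingredient is needed beyond repeated use of the infinite pigeonhole principle.
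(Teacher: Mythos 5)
The paper does not actually prove Theorem~\ref{Thrm:Ramsey}; it is stated as a classical result of Ramsey and cited to \cite{Ram}, so there is no in-paper proof to compare against. Your argument is the standard inductive proof of the infinite Ramsey theorem (induction on $k$, a diagonal chain of refining infinite sets using the $(k-1)$-case, then one final pigeonhole over the colors $c_i$), and it is correct as written: the verification that $n_{i_2},\ldots,n_{i_k}\in A_{i_1}$ follows from $A_{i_j-1}\subseteq A_{i_1}$ for $i_j>i_1$, exactly as you say. No gap.
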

	
The use of so-called Ramsey theory has become common in Banach space theory. Here, as in other areas of analysis, the finite set $\{1,\ldots,k\}$ is often replaced by a compact metric space, and equality is replaced by approximate equality. Perhaps the first such use of this theorem in Banach spaces was by Brunel and Sucheston in \cite{BruSuc} to prove the existence of spreading models. Recall that a basic sequence in a Banach space is $C$-spreading if it is $C$-equivalent to all of its subsequences.

\begin{definition}\label{Def:spreadingmodel}
	Let $(e_i)$ be a normalized $1$-spreading basis for a Banach space $(E,\|\cdot\|_s)$  and $(x_i)$ be a normalized basic sequence in a Banach space $(X,\|\cdot\|)$. Assume that for all $\e>0$ and $n\in\N$ there exists $N\in\N$ so that for all scalars $(a_i)_{i=1}^n\in[-1,1]^n$ and all increasing naturals $N\leq k_1<k_2<\cdots<k_n$ we have
	\[
	\Bigg| \Big\|\sum_{i=1}^n a_i x_{k_i}\Big\|-\Big\|\sum_{i=1}^n a_i e_i\Big\|_s \Bigg|\leq\e.
	\]
	Then $(e_i)$ is said to be the \emph{spreading model} generated by $(x_i)$. We  also say that $E$ is a spreading model of $X$ with respect to the sequence $(x_i)$.
\end{definition}
 
 A given space $X$ may have many different spreading models with respect to different sequences $(x_i)$ in $X$. Additionally, an arbitrary normalized basic sequence $(x_i)$ in $X$ usually does not generate any spreading model. In order for the sequence $(x_i)$ to have a spreading model, it is necessary (and sufficient) that the limits $\lim_{k_1\to\infty} \|\sum_{i=1}^n a_i x_{k_i}\|$ exist for all finite sequences of scalars $(a_i)$. Such a sequence in a Banach space has been called a \emph{good} sequence. Brunel and Sucheston showed the following theorem, the proof of which will be useful to recall.
 
 \begin{theorem}[Brunel-Sucheston]\label{Thrm:spreadingmodel}
 	Let $(x_i)$ be a normalized basic sequence in a Banach space $X$. Then some subsequence of $(x_i)$ is good.
 \end{theorem}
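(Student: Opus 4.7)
The plan is to prove this by iterated application of Ramsey's theorem combined with a diagonal argument, which is the standard Brunel--Sucheston strategy.

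First I would observe that since $(x_i)$ is normalized, for any $n$ and any scalars $(a_i)_{i=1}^n$ with $|a_i|\le 1$ the quantity $\|\sum_{i=1}^n a_i x_{k_i}\|$ lies in a fixed bounded interval (say $[0,n]$). Fix a countable dense set $D \sub \bigcup_n [-1,1]^n$ of finitely supported rational coefficient tuples, and enumerate pairs $(\bar a^{(j)}, \e_j)$ with $\bar a^{(j)} \in D$ of some length $n_j$ and $\e_j > 0$ with $\e_j \to 0$. For each such pair, partition $[0,n_j]$ into finitely many intervals of length at most $\e_j$, and define a coloring $f_j : [\N]^{n_j} \to \{1,\ldots,r_j\}$ by coloring the increasing tuple $k_1<\cdots<k_{n_j}$ according to which interval contains $\|\sum_{i=1}^{n_j} a^{(j)}_i x_{k_i}\|$.

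Next I would apply Theorem~\ref{Thrm:Ramsey} inductively: starting from $M_0 = \N$, at stage $j$ I apply Ramsey to $f_j$ restricted to $[M_{j-1}]^{n_j}$ to obtain $M_j \in [M_{j-1}]^\omega$ on which $f_j$ is constant. A standard diagonal extraction then produces an infinite set $M \sub \N$ such that $M \setminus M_j$ is finite for each $j$. Writing $M = \{m_1 < m_2 < \cdots\}$ and setting $y_i = x_{m_i}$, the resulting subsequence has the property that for every fixed $(\bar a^{(j)},\e_j)$ the values $\|\sum_i a^{(j)}_i y_{k_i}\|$ differ by at most $\e_j$ whenever the indices $k_1 < \cdots < k_{n_j}$ are sufficiently large. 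Hence $\lim_{k_1 \to \infty} \|\sum_i a^{(j)}_i y_{k_i}\|$ exists along all tuples in $D$.

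Finally, I would pass from rational to real coefficients using uniform continuity: the map $(a_1,\ldots,a_n) \mapsto \|\sum a_i y_{k_i}\|$ is $K$-Lipschitz on $[-1,1]^n$ in the $\ell_1$ metric on coefficients, where $K$ depends only on the basis constant of $(x_i)$ and on $n$ (since finite sums of basis vectors have norms controlled by partial-sum projections). Approximating any real tuple in $[-1,1]^n$ by elements of $D$ and using a standard $3\e$-argument, the limit exists for all real coefficient tuples as well, so $(y_i)$ is good.

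The main obstacle is purely organizational: carrying out the diagonalization so that a single subsequence simultaneously stabilizes all of the countably many colorings, and then verifying that the Lipschitz-type control from the basis constant suffices to promote stability along $D$ to stability along all real coefficients. No single step is deep; the crux is the combinatorial extraction, which Theorem~\ref{Thrm:Ramsey} delivers directly.
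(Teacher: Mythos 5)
Your proposal is correct and follows essentially the same iterated-Ramsey-plus-diagonalization route as the paper's proof, differing only cosmetically: you replace the compact-metric-space version of Ramsey by explicit finite interval colorings, and you collapse the paper's two-stage nested diagonalization (first over tolerances $\e_m$ for a fixed tuple, then over the enumeration of rational tuples) into a single diagonalization over all pairs $(\bar a^{(j)},\e_j)$. You are also more explicit than the paper about the density step promoting convergence from rational to real coefficients; note that the relevant Lipschitz constant there can simply be taken to be $1$ by the triangle inequality, since the $y_{k_i}$ are normalized, so no appeal to the basis constant is needed.
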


\begin{proof} 
	Fix some sequence $(\e_m)$ of numbers such that $0<\e_m<1$ and $\e_m\to 0$. Let $(a_i)_{i=1}^n\in[-1,1]^n$ be scalars. Define the map $f:[\N]^n\to[0,n]$ by $f(k_1,\ldots,k_n)=\|\sum_{i=1}^n a_i x_{k_i}\|$ where $(k_i)$ are in increasing order. Since $[0,n]$ is compact, we apply Ramsey's theorem to find an $M_1\in[\N]^\omega$ such that the subsequence $(y_i^1)$ of $(x_i)$ corresponding to $M_1$ satisfies 
	\[
	\left|\,\Big\|\sum_{i=1}^n a_i y_{j_i}^1\Big\|-\Big\|\sum_{i=1}^n a_i y_{k_i}^1\Big\|\,\right|\leq\e_1
	\]
	for any $j_1<\ldots<j_n$ and $k_1<\ldots<k_n$. Repeating this process for each $m$, we can find nested subsets $M_m$ corresponding to nested subsequences $(y_i^m)$ such that
	\[
	\left|\,\Big\|\sum_{i=1}^n a_i y_{j_i}^m\Big\|-\Big\|\sum_{i=1}^n a_i y_{k_i}^m\Big\|\,\right| \leq\e_m
	\]
	for any $j_1<\ldots<j_n$ and $k_1<\ldots<k_n$. Let $(y_i)$ be the diagonal sequence of these $(y_i^m)$. Then the limit	$\lim_{k_1\to\infty} \|\sum_{i=1}^n a_i y_{k_i}\|$ exists.
	
	Now enumerate the countable set
	\[
	R=\{(a_i)_{i=1}^n: n\in\N, \, a_i\in\Q\cap[-1,1] \text{ for } 1\leq i\leq n\}.
	\]
	By the technique of the previous paragraph, we can find nested subsequences $(z_i^m)$ of $(x_i)$ such that the required limit exits whenever any of the scalars coming from the first $m$ elements in the enumeration of $R$ are used. Letting $(z_i)$ be the diagonal sequence of the $(z_i^m)$, the required limit exists whenever any $(a_i)$ in $R$ is used showing that $(z_i)$ is a good subsequence of $(x_i)$.
\end{proof}

There are many theorems in Banach space theory having the above form, namely, that every sequence in $X$  of a certain type has a subsequence satisfying a property $P$. A different assumption is also sometimes investigated, namely, that every tree in $X$ with certain specified structure has a branch satisfying a property $P$. Often, for a given property $P$, this branch condition is significantly stronger than the subsequence condition. For example, one of the first investigations along this line is \cite{OdeSch1}. There it is shown that, if every tree on $X$ has a branch with a certain property, then $X$ can be embedded into a space with an FDD $(E_i)$ so that all normalized sequences in $X$ which are almost a skipped blocking of $(E_i)$ have that property. This is then used to show that, if $X$ is a separable reflexive Banach space such that every normalized weakly null tree on $X$ has a branch uniformly equivalent to the unit vector basis of $\ell_p$, then $X$ isomorphically embeds into an $\ell_p$ sum of finite dimensional spaces. Branches and trees have also been used to characterize certain classical properties of Banach spaces such as having a separable dual (see \cite{DutFon}). 

We present some of the necessary definitions regarding trees and branches in Banach spaces.

\begin{definition}\label{Def:treeonX}
	A (countably splitting) \emph{tree} $\sT$ \emph{on a Banach space} $X$ is a function from $[\N]^{<\omega}$ to $X\cup\{\es\}$, written $\sT=(x_A)_{A\in[\N]^{<\omega}}$, where $x_A=\es$ iff $A=\es$. An element $x_A$ in $\sT$ is called a \emph{node}. The ordering of the elements of $\sT$ is the ordering induced by the usual initial segment partial ordering on $[\N]^{<\omega}$ where $A\preceq B$ iff there exists an $n\in\N$ such that $B\cap\{1,\ldots,n\}=A$.
\end{definition}

Through this induced ordering, relevant definitions concerning trees on $\N$ carry over to trees on $X$. In particular, a branch of $\sT$ is a sequence $(x_{\{k_1,\ldots,k_n\}})_{n=1}^\infty$ in $X$ where $k_1<k_2<\ldots$ is an increasing sequence of naturals. If $x_A$ is a node in a countably splitting tree on $X$, the collection of successors of $x_A$ is naturally ordered by considering $x_{A\cup\{n\}}<x_{A\cup\{m\}}$ iff $n<m$. We will call this sequence \emph{the sequence of nodes below} $A$.

We call $\sT$ \emph{normalized} iff every node of $\sT$ is normalized. Assume that $(x_i)$ is a basic sequence in $X$. Then $\sT$ is a \emph{block tree} (with respect to $(x_i)$) iff, for every $A\in[N]^{<\omega}$, the sequence of nodes below $A$ is a block basic sequence of $(x_i)$. Notice that, if $\sT$ is a block tree on $X$, then there is a subtree of $\sT$ so that all the branches of the subtree are block basic sequences.

\begin{remark} Let $(x_i)$ be a sequence in a Banach space $X$. Then the tree $(x_A)$ on $X$ defined by $x_A=x_{\max A}$  is the tree of partial subsequences of $(x_i)$. The collection of branches of $(x_A)$ is precisely the collection of subsequences of $(x_i)$. This demonstrates that what we call the ``branch condition'' above is at least as strong as what we call the ``subsequence condition.''
\end{remark}

The generality of Theorem \ref{Thrm:spreadingmodel} leads us to investigate the strength of the corresponding branch condition. Theorem \ref{Thrm:1asymplp}, Corollary \ref{Cor:containlp}, and Theorem \ref{Thrm:converse1} will show:

\vspace{.15in}
\noindent\textbf{Theorem A.} \emph{Let $X$ be a Banach space with basis $(x_i)$. Then every normalized block tree on $X$ has a good branch iff $X$ is $1$-asymptotic $\ell_p$ for some $1\leq p\leq\infty$. In particular, such a space contains almost isometric copies of $\ell_p$ (or $c_0$ in the case $p=\infty$).}
\vspace{.15in}

The main tool in this regard will be Krivine's theorem (see \cite{Kri} and \cite{Lem}) which we state here in a convenient form for our use.

\begin{theorem}[Krivine, Lemberg]\label{Thrm:krivine}
	Let $(x_i)$ be a normalized basis for a Banach space $X$. There exists a $1\leq p\leq\infty$ such that for all $n\in N$ and $\e>0$ there exists a block basic sequence $(y_i)$ of $(x_i)$ such that any $n$-vectors of $(y_i)$ are $(1+\e)$-equivalent to the unit vector basis of $\ell_p^n$.
\end{theorem}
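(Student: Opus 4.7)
The plan is to combine the Brunel--Sucheston spreading model construction (Theorem 1.3) with an iterated block-averaging argument, and then invoke a rigidity statement that forces the stabilized norm to be an $\ell_p$ norm; the $p$ that emerges is the Krivine $p$ of the space $X$. The first reduction is to apply Theorem 1.3 to extract a good subsequence of $(x_i)$ and pass to its spreading model $(e_i)$. Since for any fixed $n$ and any $\delta > 0$ a block of $(e_i)$ can be reproduced up to distortion $1+\delta$ by a block basic sequence of $(x_i)$ whose supports are chosen far enough out, it suffices to exhibit, for each $n$ and $\e$, a block basic sequence of $(e_i)$ that is $(1+\e)$-equivalent to the unit basis of $\ell_p^n$. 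The payoff of working inside $(e_i)$ is that its 1-spreading property makes $\|\sum_i a_i e_{k_i}\|_s$ a function of the coefficient sequence only, turning the problem into one about a single symmetric norm on the finitely supported real sequences.

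For the block-averaging step, for each $k,n \in \N$ I would introduce the constant-coefficient blocks $u_j^{(k)} = \sum_{i=(j-1)k+1}^{jk} e_i$ for $j = 1,\ldots,n$, together with the induced norm on $\R^n$ defined by
\[
\|(a_1,\ldots,a_n)\|_{n,k} = \Big\|\sum_{j=1}^n a_j \frac{u_j^{(k)}}{\|u_j^{(k)}\|_s}\Big\|_s.
\]
Because all norms on $\R^n$ are equivalent and $(e_i)$ can be taken to be bimonotone, the family $(\|\cdot\|_{n,k})_{k\in\N}$ is contained in a compact subset of the space of norms on $\R^n$; a diagonal extraction over $k$ and $n$, together with a rational-coefficient density step exactly as in the proof of Theorem 1.3, produces a single limit norm $N$ on all finitely supported real sequences. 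By construction $N$ is 1-subsymmetric and satisfies a self-similarity identity: replacing each coordinate $a_j$ by a constant-coefficient block of length $k$ and renormalizing returns a scalar multiple of $N(a_1,\ldots,a_n)$, the scalar being given by a single multiplicative function $\phi_N:\N\to(0,\infty)$.

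The heart of the argument, and where I expect the main obstacle, is the rigidity step showing that $N$ must coincide with $\|\cdot\|_{\ell_p}$ for some $1 \le p \le \infty$. This is Krivine's theorem proper; the original proof proceeds via Brunel--Sucheston types on an ultrapower of $X$, while the Lemberg version replaces the type machinery with a direct convexity and Hahn--Banach argument applied to the block-averaging operator. In either approach, submultiplicativity and monotonicity of $\phi_N$ force $\phi_N(k) = k^{1/p}$ for a unique $p \in [1,\infty]$, and then 1-subsymmetry and positive homogeneity upgrade this scalar identity to the full identification $N = \|\cdot\|_{\ell_p}$. Once $N$ is identified, the proof concludes by choosing $k$ so that $\|\cdot\|_{n,k}$ is within $\e/2$ of $\|\cdot\|_{\ell_p}$ on the unit ball of $\R^n$, lifting the normalized blocks $u_j^{(k)}/\|u_j^{(k)}\|_s$ back to a block basic sequence of $(x_i)$, and absorbing the remaining $\e/2$ into the spreading-model approximation given by Theorem 1.3.
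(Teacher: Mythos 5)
The paper does not prove Theorem \ref{Thrm:krivine}; it simply cites Krivine and Lemberg. So I can only evaluate your sketch on its own terms, and there is a genuine gap precisely at the point you flag as ``the heart of the argument.''

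Your reduction to the spreading model $(e_i)$ and the introduction of the block norms $\|\cdot\|_{n,k}$ is the right starting framework; this is indeed the skeleton of Lemberg's approach. But the two claims you make about the resulting limit norm $N$ do not hold as stated. First, a diagonal extraction over $k$ does not by itself produce a self-similar norm. If $N=\lim_{k\in S}\|\cdot\|_{n,k}$ along some infinite $S\subseteq\N$, then $N$ applied to an $m$-fold repeat is controlled by $\lim_{k\in S}\|\cdot\|_{n,mk}$, and one needs $S$ to be simultaneously stable under multiplication by every $m$ --- which an infinite proper subset of $\N$ cannot be. This is exactly why Lemberg replaces diagonalization with a fixed-point/eigenvector argument for the commuting semigroup of block-averaging operators. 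Second, and more seriously, the final claim that ``$1$-subsymmetry and positive homogeneity upgrade the scalar identity $\phi_N(k)=k^{1/p}$ to $N=\|\cdot\|_{\ell_p}$'' is false. For $1<p<\infty$, consider the Marcinkiewicz (weak-$\ell_p$) norm
\[
N(a)=\sup_{k\geq1}\frac{\sum_{i=1}^k a_i^*}{k^{1-1/p}},
\]
where $(a_i^*)$ is the decreasing rearrangement of $(|a_i|)$. This norm is $1$-symmetric, $N(e_1)=1$, and it satisfies exactly the self-similarity you describe: writing $T_m a$ for the $m$-fold repeat of a decreasing nonnegative $a$, the ratio $\sum_{i\le k}(T_m a)_i/k^{1-1/p}$ is unimodal on each interval $[jm,(j+1)m]$, so its supremum is attained at multiples of $m$ and one gets $N(T_m a)=m^{1/p}N(a)$ for every $m$. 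Yet $N$ is not the $\ell_p$ norm. So subsymmetry, homogeneity, and a fundamental function $k^{1/p}$ do not identify $\ell_p$; this is precisely the obstruction that Krivine's spectral/ultrapower argument and Lemberg's Perron--Frobenius style eigenvector argument are designed to overcome, and the sketch does not contain a substitute for that machinery.
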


We will also investigate the even stronger assumption that all block basic sequences are good. We show that this does not characterize the unit vector bases of $\ell_p$ and $c_0$; however, Theorem \ref{Thrm:stab1asymplp}, Corollary \ref{Cor:lpsum}, and Theorem \ref{Thrm:converse2} will show:

\vspace{.15in}
\noindent\textbf{Theorem B.} \emph{Let $X$ be a Banach space with basis $(x_i)$. Then every block basic sequence $(x_i)$ is good iff $X$ is stabilized $1$-asymptotic $\ell_p$ for some $1\leq p\leq\infty$. In particular, such a space is isomorphic to an $\ell_p$ sum of finite dimensional spaces (or $c_0$ in the case $p=\infty$).}
\vspace{.15in}

The notion of asymptotic structure was developed in \cite{MauMilTom} by Bernard Maurey, Vitali Milman, and Nicole Tomczak-Jaegermann. Like the concept of spreading models, it is a way of describing the asymptotic geometry of a Banach space by looking at the uniform behavior of finite subspaces. Although other filters of infinite dimensional subspaces of $X$ may be used, we will define asymptotic structure relative to the tail subspaces $[x_i]_{i=n}^\infty$ where $(x_i)$ is a fixed basis of $X$. 

\begin{definition}\label{Def:asymptoticstructure}
	Fix $n\in\N$. Let $E$ be an $n$-dimensional Banach space with basis $(e_i)_{i=1}^n$. Then $E$ is called an (n-dimensional) \emph{asymptotic space} for a Banach space $X$ with basis $(x_i)$ provided that for all $\e>0$ we have
	\begin{align*}
	\forall m_1&\in\N \,\exists y_1\in S([x_i]_{i=m_1}^\infty)\quad\cdots\quad\forall m_n\in\N \,\exists y_n\in S([x_i]_{i=m_n}^\infty) \\
	&\left( (y_i)_{i=1}^n \text{ is a block sequence of $(x_i)$ and $(1+\e)$-equivalent to $(e_i)_{i=1}^n$}\right).
	\end{align*}
	The collection of all $n$-dimensional asymptotic spaces for $X$ is denoted $\{X\}_n$. We will often identify $(e_i)$ with $E$ and $(x_i)$ with $X$ by a  slight  and common abuse.
\end{definition}

Note that the above definition is equivalent to postulating the existence of a winning strategy for player two in a finite game of length $n$ where the first player plays natural numbers determining tail subspaces and the second player plays normalized vectors of finite support in the tail subspaces.  The winning condition for player two is that the vectors played form a block basic sequence which is $(1+\e)$-equivalent to $(e_i)_{i=1}^n$. The sequence of moves $(m_1,y_1,m_2,y_2,\ldots,m_n,y_n)$ will be called a \emph{run} of the game, and the block vectors $(y_i)_{i=1}^n$ will be called an \emph{outcome} of the game.

By Krivine's theorem, there exists a $1\leq p\leq\infty$ such that for all $n$ we know $\ell_p^n$ is in $\{X\}_n$. In particular, we know that $\{X\}_n\not=\es$ for all $n$. If these are the only asymptotic spaces for $X$ up to $C$-equivalence, then $X$ is called an asymptotic $\ell_p$ space.

\begin{definition}\label{Def:asymptoticlp}
	A Banach space $X$ is called \emph{asymptotic} $\ell_p$ where $1\leq p\leq\infty$ provided that there exists a $C\geq 1$ such that for all $n\in\N$ and all $E\in\{X\}_n$ we have $E\stackrel{C}{\sim}\ell_p^n$. In this case, we will also call $X$ a $C$-asymptotic $\ell_p$ space.
\end{definition}

Next we state an equivalent way of defining $\{X\}_n$ which was given in \cite{MauMilTom}. First, a preliminary definition:

\begin{definition}\label{Def:subspaceplayer}
	Let $X$ be a Banach spaces with basis $(x_i)$. Given a collection $\sC$ of $n$-dimensional Banach spaces with fixed bases, we say that $\sC$ is \emph{subspace winning} provided that for all $\e>0$ we have
	\begin{align*}
		\exists m_1\in\N &\,\forall y_1\in S([x_i]_{i=m_1}^\infty)\quad\cdots\quad\exists m_n\in\N \,\forall y_n\in S([x_i]_{i=m_n}^\infty) \\
		( &(y_i)_{i=1}^n \text{ is a block basic sequence of $(x_i)$} \\ &\qquad\Rightarrow \text{ $\exists\,[e_i]_{i=1}^n\in\sC \,( (y_i)$ is $(1+\e)$-equivalent to $(e_i)$)}).
	\end{align*}
\end{definition}

Again, we can state the above definition by saying that $\sC$ is subspace winning whenever player one has a winning strategy in our usual game to force the outcome to be close to an element of $\sC$. The equivalent characterization of $\{X\}_n$ from \cite{MauMilTom} is:

\begin{theorem}\label{Thrm:subspaceplayer}
	Let $X$ be a Banach space with basis $(x_i)$. Then 
	\[
	\{X\}_n=\bigcap\{\sC:\sC \text{ is $n$-dimensional subspace winning}\}.
	\]
\end{theorem}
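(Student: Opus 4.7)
My plan is to prove the equality by establishing both inclusions. For $\{X\}_n\sub\bigcap\sC$ I would play a winning strategy of player two in the asymptotic game of Definition~\ref{Def:asymptoticstructure} against a winning strategy of player one in the subspace game of Definition~\ref{Def:subspaceplayer}. Concretely, fix $E=(e_i)_{i=1}^n\in\{X\}_n$, a subspace winning collection $\sC$, and $\e>0$. Choose $\d>0$ with $(1+\d)^2\leq 1+\e$. The hypothesis $E\in\{X\}_n$ at precision $\d$ provides a strategy $\tau$ for player two whose outcomes are block basic sequences $(1+\d)$-equivalent to $(e_i)$, and the hypothesis that $\sC$ is subspace winning at precision $\d$ provides a strategy $\sigma$ for player one whose block-basic-sequence outcomes are $(1+\d)$-equivalent to some $(f_i)$ with $[f_i]\in\sC$. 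The quantifier structures are compatible: $\sigma$ consumes $(m_1,y_1,\ldots,y_{i-1})$ and outputs $m_i$, while $\tau$ consumes $(m_1,y_1,\ldots,m_i)$ and outputs $y_i$. Running them against one another yields a single run whose outcome $(y_i)_{i=1}^n$ is a block basic sequence simultaneously $(1+\d)$-equivalent to $(e_i)$ and to some $(f_i)\in\sC$; hence $(e_i)$ is $(1+\e)$-equivalent to $(f_i)$. Since $\e>0$ was arbitrary, $E$ belongs to $\sC$ in the natural Banach--Mazur sense implicit in the ``slight abuse'' of Definition~\ref{Def:asymptoticstructure}.

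For the reverse inclusion, suppose $E\notin\{X\}_n$, so there exists $\e_0>0$ at which player two has no winning strategy in the asymptotic game to force the outcome to be a block basic sequence $(1+\e_0)$-equivalent to $(e_i)$. Since the game has only $2n$ alternating moves with perfect information and a binary payoff, Zermelo's theorem (backward induction on the game tree, with the axiom of choice selecting a winning continuation at each node where one exists) supplies a strategy $\sigma$ for player one under which every run produces an outcome that is either not a block basic sequence or not $(1+\e_0)$-equivalent to $(e_i)$. Let $\sC_\sigma$ be the collection of those bases $(y_i)_{i=1}^n$ obtainable as block-basic-sequence outcomes of $\sigma$. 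Then $\sC_\sigma$ is itself subspace winning, witnessed trivially by $\sigma$ (each such outcome is $1$-equivalent to itself in $\sC_\sigma$), while by construction no element of $\sC_\sigma$ is $(1+\e_0)$-equivalent to $(e_i)$; hence $E\notin\bigcap\sC$.

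The main obstacle I anticipate is the determinacy step, since neither move set is compact (player one plays arbitrary naturals and player two plays on infinite-dimensional unit spheres), so one cannot appeal to Gale--Stewart style topological determinacy. However, this does not matter here: the game has a fixed finite length and a definite two-valued payoff, so pure backward induction produces $\sigma$ without any topological hypothesis on the moves. The remainder of the argument reduces to the triangle-inequality bookkeeping of Banach--Mazur equivalence constants together with the observation that compatibility of the quantifier blocks $\forall m_i\,\exists y_i$ and $\exists m_i\,\forall y_i$ is exactly what permits the two strategies to be run against one another in a single well-defined play.
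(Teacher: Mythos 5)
The paper does not actually prove Theorem~\ref{Thrm:subspaceplayer}; it is quoted from \cite{MauMilTom} with no argument supplied, so there is nothing internal to compare against. Your reconstruction is the standard proof from the asymptotic-structure literature, and the structure of both inclusions is right. In the forward direction you run player two's winning strategy for the asymptotic game of Definition~\ref{Def:asymptoticstructure} against player one's winning strategy for the subspace game of Definition~\ref{Def:subspaceplayer}, and observe that a single run witnesses both objectives; the quantifier alternation is genuinely compatible, as you say. In the reverse direction you correctly observe that determinacy here is free: the game has fixed finite length $2n$, so backward induction (with choice to select witnesses) produces a strategy $\sigma$ for player one regardless of the size of the move sets, and the collection $\sC_\sigma$ of block-basic outcomes under $\sigma$ is then a subspace-winning collection missing $E$.

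The one place you should tighten is the closing step of the forward inclusion. What you actually establish is that for every $\e>0$ the basis $(e_i)$ is $(1+\e)$-equivalent to some member of $\sC$, i.e.\ that $E$ lies in the Banach--Mazur \emph{closure} of $\sC$; to conclude $E\in\sC$ you need $\sC$ to be closed under such limits. You attribute this to the ``slight abuse'' remark following Definition~\ref{Def:asymptoticstructure}, but that remark is only about identifying the basis $(e_i)$ with the space $E$ it spans, and says nothing about closedness of subspace-winning collections. Closedness really is an ingredient: in \cite{MauMilTom} the collections $\sC$ are taken to be closed subsets of the (compact) space of $n$-dimensional spaces with basis, and the paper's Definition~\ref{Def:subspaceplayer} simply omits this. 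Rather than absorbing it through a misquoted aside, you should state the closure hypothesis explicitly (or observe that replacing each $\sC$ by its closure leaves the family of subspace-winning collections and hence the intersection unchanged). With that adjustment the proof is correct.
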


Next, we cite a result we will need about stabilizing asymptotic structure from \cite{MilTom}. It says that there is always a subspace of $X$ where any outcome of the asymptotic game results in a sequence close to a member of $\{X\}_n$ provided that the support of player two's first move is far enough out. 

\begin{theorem}[Milman-Tomczak-Jaegerman]\label{Thrm:MilTom}
	Let $X$ be a Banach space. There exists a subspace $Z$ of $X$ with a normalized basis $(z_i)$ such that the following holds:
	\begin{quote}
		For all $n\in\N$ and $\e>0$ there exists $N=N(n,\e)$ such that for any normalized block vectors $(y_i)_{i=1}^n$ of $(z_i)$ with $\supp(y_1)>N(n,\e)$ there exists $E\in\{X\}_n$ such that $(y_i)_{i=1}^n\stackrel{1+\e}{\sim} E$.
	\end{quote}
	The subspace $Z$ will be called a stabilizing subspace for $X$.
\end{theorem}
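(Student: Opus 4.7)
The plan is to build $Z$ by diagonalization over pairs $(n,k)\in\N\times\N$: for each such pair I produce a block subspace $Y_{n,k}$ of $X$ with basis extracted from $(x_i)$, nested as $(n,k)$ varies, on which the desired stabilization holds for $n$ and $\e=1/k$, and I then take a diagonal basis of the resulting subspaces to form $(z_i)$.

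The core work is carrying out the single-pair step. For fixed $(n,\e)$, the first key claim is that the $\e$-neighborhood $\sC_\e$ of $\{X\}_n$ in the Minkowski compactum $M_n$ of $n$-dimensional normalized bases (modulo Banach--Mazur distance) is subspace winning in the sense of Definition \ref{Def:subspaceplayer}. Compactness of $M_n$ makes $M_n\setminus\sC_\e$ compact, and for each $E$ in this complement Theorem \ref{Thrm:subspaceplayer} supplies a subspace-winning family $\sC^E$ with $E\notin\sC^E$. A finite subcover yields $\sC^{E_1},\ldots,\sC^{E_L}$ with $\bigcap_j \sC^{E_j}\sub \sC_\e$, and finite intersections of subspace-winning families are themselves subspace winning: at round $i$ of the depth-$n$ game, one simply plays $m_i := \max_j \sigma_j(y_1,\ldots,y_{i-1})$ against the common history, where $\sigma_j$ is the winning strategy for $\sC^{E_j}$. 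Hence $\sC_\e$ is subspace winning, producing a reactive player-one strategy $\sigma$ whose outcomes all lie in $\sC_\e$.

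The second step is to convert $\sigma$ into a single subspace condition. I build the basis $(w_j)$ of $Y_{n,\e}$ recursively: having chosen $w_1,\ldots,w_j$, let $M_j$ be the maximum of $\sigma$'s responses over all partial histories of length $<n$ drawn from a finite $\delta$-net (with $\delta$ small relative to $\e$) of the unit sphere of $[w_1,\ldots,w_j]$; then pick $w_{j+1}$ with $\supp(w_{j+1})>M_j$. A mild upper-semicontinuous perturbation of $\sigma$ (replace $m_i(y_1,\ldots,y_{i-1})$ by its supremum over a $\delta$-ball around $(y_1,\ldots,y_{i-1})$) remains subspace winning, since it only demands more of player two. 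Any genuine block basic sequence in $(w_j)$ starting past $N:=m_1$ is then within $\delta$ of a net history, and the support-growth of $(w_j)$ outruns $\sigma$'s demands, so the outcome is $(1+\e)$-close to $\{X\}_n$.

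Finally, enumerate all pairs $(n_i,1/k_i)$, iterate the construction of the previous two steps to obtain $Y_{(n_1,k_1)}\supseteq Y_{(n_2,k_2)}\supseteq\cdots$, and take $z_i$ to be the $i$-th basis vector of $Y_{(n_i,k_i)}$ with support chosen large enough that $Z:=[z_i]$ simultaneously inherits the stabilization property for every $(n,\e)$. The principal obstacle is the conversion in the second step: translating player one's history-dependent strategy into a single subspace in which a support bound on $y_1$ alone suffices, regardless of player two's future moves. The finite-dimensional compactness of the partial basis $[w_1,\ldots,w_j]$, which makes the relevant $\delta$-nets finite so that $M_j$ is a well-defined natural number, is precisely what permits this anticipation.
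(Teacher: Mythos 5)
The paper states Theorem~\ref{Thrm:MilTom} only as a citation to \cite{MilTom}; it supplies no proof, so there is no internal argument to compare yours against. That said, your outline does reproduce the standard argument from that reference, and the architecture is right: compactness of the Minkowski compactum of $n$-dimensional bases combined with the subspace-winning characterization of $\{X\}_n$ (Theorem~\ref{Thrm:subspaceplayer}); closure of subspace-winning families under finite intersection; conversion of a reactive player-one strategy into a single block subspace by anticipating it on finite nets; and a diagonal pass over the pairs $(n,1/k)$.

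Two steps are glossed over and need repair. First, your finite-subcover argument implicitly treats each $M_n\setminus\sC^E$ as open, but Theorem~\ref{Thrm:subspaceplayer} tells you only that some subspace-winning $\sC^E$ omits $E$, with no control on the topology of $\sC^E$. Instead, use the definition of $\{X\}_n$ directly: $E\notin\{X\}_n$ means there exists $\eta_E>0$ such that player two cannot force outcomes $(1+\eta_E)$-equivalent to $E$, so by finite-game determinacy player one can force every outcome outside the open ball $B_{\eta_E}(E)$. Take $\sC^E:=M_n\setminus B_{\eta_E}(E)$; this is subspace winning and its complement $B_{\eta_E}(E)$ is genuinely open, giving the cover you need. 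Second, the ``supremum over a $\delta$-ball'' of $\sigma$'s responses is not obviously finite (a strategy can jump arbitrarily between nearby histories), while the maximum over a finite net does not dominate $\sigma$ on non-net histories, so ``it only demands more of player two'' is not justified as written. The correct device is to choose, for each $y_i$ supported in $[w_a,\ldots,w_b]$, a net vector $\tilde y_i$ lying in the \emph{same} block range $[w_a,\ldots,w_b]$ and $\delta$-close to $y_i$; this forces you to take nets inside each block range rather than one net in the unit sphere of $[w_1,\ldots,w_j]$. With this support coherence, $(\tilde y_1,\ldots,\tilde y_n)$ is itself a legal run against $\sigma$, since $\supp(\tilde y_i)\geq\min\supp(w_a)>M_{a-1}\geq\sigma(\tilde y_1,\ldots,\tilde y_{i-1})$, hence $(\tilde y_i)$ lies close to $\sC_\e$ and so does $(y_i)$. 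Without it, proximity of $(y_i)$ to a net tuple proves nothing, because the net tuple need not be a game outcome at all. With both fixes your sketch matches the Milman--Tomczak-Jaegermann argument.
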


Finally, in Theorem \ref{Thrm:newstab} and Theorem \ref{Thrm:asymptoticzippin}, we prove:

\vspace{.15in}
\noindent\textbf{Theorem C.} \emph{Let $X$ be a Banach space. Then there exists a (not necessarily semi-normalized) basic sequence $(x_i)$ in $X$ such that every normalized block basic sequence of $(x_i)$ is good. Additionally, if the sequence $(x_i)$ produced is semi-normalized and also unconditional, then all of the spreading models of these good sequences are uniformly equivalent to the unit vector basis of $c_0$ or $\ell_p$ for some $1\leq p<\infty$.}
\vspace{.15in}
 
In the following definitions, we use the notation $E<F$ to indicate that $\max(E)<\min(F)$ when $E,F\in[\N]^{<\omega}$.

\begin{definition}\label{Def:constantcoeff}
	Let $(x_i)$ be a basic sequence in a Banach space $X$. A block basic sequence $(y_i)$ of $(x_i)$ is a \emph{normalized constant coefficient block basic sequence} (or NCCB basic sequence) iff there exists a sequence of finite subsets of naturals $(E_i)\sub[\N]^{<\omega}$ such that $E_i<E_{i+1}$ and
	\[
	y_i=\frac{\sum_{k\in E_i} x_k}{\|\sum_{k\in E_i} x_k\|}
	\]
	for all $i\in\N$. We will say that the NCCB basic sequence $(y_i)$ corresponds to the sequence $(E_i)$ since each $(y_i)$ is completely determined by $(x_i)$ and $(E_i)$.
\end{definition}

Following a technique in \cite{HalOde}, our work uses a generalization of Ramsey's theorem.  Rather than stabilizing a property of finite subsets of naturals relative to further infinite subsets, this generalization involves stabilizing a property of finite blockings of naturals relative to coarser infinite blockings.

\begin{definition}\label{Def:block}
	A finite or infinite sequence $(E_i)\sub[\N]^{<\omega}$ is called a \emph{blocking of naturals} provided that for all $i$ we have $E_i<E_{i+1}$. In this context, we use $\N$ to denote the blocking of all singletons. If $E=(E_i)$ and $F=(F_i)$ are blockings of naturals, then $F$ is \emph{coarser} than $E$ provided that every element of $F$ is a union of elements of $E$. We denote this by $F\ssub E$. 
\end{definition}

If $P=(P_i)_{i=1}^\infty$ is an infinite blocking of naturals, then we use the notation $\langle P\rangle^\lambda$ to denote the collection of blockings of naturals which are coarser than $P$ and of length $\lambda$.

The first step in obtaining the Milliken-Taylor theorem was given by Neil Hindman in \cite{Hin}. Although Hindman's result was originally stated in terms of finite sums, we give a reformulation in terms of finite unions and our previously defined notation.

\begin{theorem}[Hindman]\label{Thrm:hindman}
	If $r\in\N$ and $f:[\N]^{<\omega}\to\{1,\ldots,r\}$ then there exists $Q\in\langle\N\rangle^\omega$ such that $f$ is constant on the collection of sets which are finite unions of elements from $Q$.
\end{theorem}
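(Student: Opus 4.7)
My plan is to follow the Galvin--Glazer argument, which replaces Hindman's intricate combinatorial bookkeeping by the algebraic statement that a certain compact right-topological semigroup has an idempotent. Throughout, let $S = [\N]^{<\omega}\setminus\{\es\}$.

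First, I would extend the union operation on $S$ to the Stone--\v{C}ech compactification $\beta S$ by defining, for ultrafilters $p, q$ on $S$,
\[
A \in p \cdot q \iff \{s \in S : \{t \in S : s \cup t \in A\} \in q\} \in p.
\]
A routine computation confirms that $(\beta S, \cdot)$ is a compact right-topological semigroup. Because the conclusion requires a genuine blocking rather than merely a pairwise disjoint family, I would then restrict to the closed sub-semigroup $S^\infty = \bigcap_{n \in \N}\overline{\{s \in S : \min(s) > n\}}$, whose members are the ultrafilters concentrating on sets of arbitrarily large minimum.

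By the Ellis--Numakura lemma, the nonempty compact right-topological semigroup $S^\infty$ contains an idempotent $p = p \cdot p$. Given the coloring $f$, some color class $A = f^{-1}(c)$ must lie in $p$. Setting
\[
A^\star = \{s \in A : \{t \in S : s < t \text{ and } s \cup t \in A\} \in p\},
\]
where $s < t$ abbreviates $\max(s) < \min(t)$, the key consequence of idempotence together with $p \in S^\infty$ is that $A^\star$ also lies in $p$. Granted this, I would construct the blocking $Q = (E_i)$ inductively: having chosen $E_1 < \cdots < E_n$ so that every union $\bigcup_{i \in F} E_i$ over nonempty $F \sub \{1,\ldots,n\}$ lies in $A^\star$, I would pick $E_{n+1}$ from the finite intersection
\[
\bigcap_{\es \neq F \sub \{1,\ldots,n\}} \Bigl\{t \in S : \bigcup_{i \in F} E_i < t \text{ and } \bigl(\bigcup_{i \in F} E_i\bigr) \cup t \in A^\star\Bigr\},
\]
which is nonempty since it is a finite intersection of members of the ultrafilter $p$. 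By the definition of $A^\star$, every finite union of the resulting $E_i$'s then belongs to $A$, and so $f$ is constantly $c$ on all finite unions from $Q$.

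The main obstacle will be verifying that $A^\star \in p$. This step is the combinatorial heart of the proof: one must unwind the defining relation $p = p \cdot p$ and exploit that $p$ concentrates on sets whose minimum exceeds any prescribed bound in order to promote ``$\cdot$'' to an ordered ``$\cup$''. The other serious input, the Ellis--Numakura lemma, I would treat as a black box; its standard proof uses Zorn's lemma to produce a minimal closed right-ideal and the continuity of right multiplication to extract an idempotent therein.
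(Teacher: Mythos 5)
The paper states Hindman's theorem as a citation to \cite{Hin} and gives no proof of its own, so there is no internal argument to compare against. Your plan reproduces the Galvin--Glazer ultrafilter proof, which is the now-standard replacement for Hindman's original and considerably more intricate direct combinatorial induction; the trade-off is that you import the Ellis--Numakura lemma (and hence Zorn's lemma) as a black box in exchange for a short, transparent construction. Your setup of the right-topological semigroup structure on $\beta S$, the restriction to the closed subsemigroup $S^\infty$ (needed precisely so that the blocks can be taken increasing), the choice of an idempotent $p$, and the selection of the color class $A\in p$ are all correct and in the right order.

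One caution about the induction step as you wrote it. To keep the inductive hypothesis alive you need, for each already-built union $s_F\in A^\star$, that the set $\{t\in S : s_F<t \ \text{and}\ s_F\cup t\in A^\star\}$ (with $A^\star$, not merely $A$) belongs to $p$; you should also intersect with $A^\star$ itself so that $E_{n+1}\in A^\star$ and the singleton union $E_{n+1}$ satisfies the hypothesis at stage $n+1$. The fact you announce as the combinatorial heart, namely $A^\star\in p$, is necessary but not by itself sufficient; the lemma you actually use is the slightly stronger ``starring'' property that $s\in A^\star$ implies $s^{-1}A^\star\in p$, where $s^{-1}B=\{t : s<t,\ s\cup t\in B\}$. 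This does follow by the same unwinding of $p=p\cdot p$ once $A^\star\in p$ is in hand: apply the $A^\star\in p$ argument to the set $s^{-1}A\in p$ and observe that $(s^{-1}A)^\star\subseteq s^{-1}A^\star$. With that repair the plan is sound and yields the required blocking $Q\in\langle\N\rangle^\omega$.
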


Hindman's theorem was later independently used by Keith R. Milliken in \cite{Mil} and Alan D. Taylor in \cite{Tay} to prove:

\begin{theorem}[Milliken, Taylor]\label{Thrm:miltay}
	Let $k,r\in\N$ and $P\in\langle\N\rangle^\omega$. For any $f:\langle P\rangle^k\to\{1,\ldots,r\}$ there exists a $Q\in\langle P\rangle^\omega$ such that $f$ restricted to $\langle Q\rangle^k$ is constant.
\end{theorem}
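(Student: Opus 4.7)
The proof plan is induction on $k$. The base case $k=1$ is a direct application of Hindman's theorem: given $f:\langle P\rangle^1\to\{1,\ldots,r\}$, identify the blocks of $P$ with $\N$ and apply Theorem~\ref{Thrm:hindman} to the induced coloring to obtain $Q\in\langle P\rangle^\omega$ on which $f$ is constant on $\langle Q\rangle^1$.

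For the inductive step, assume the result for $k-1$. Given $f:\langle P\rangle^k\to\{1,\ldots,r\}$, my strategy is to first produce $Q\in\langle P\rangle^\omega$ such that $f(E_1,E_2,\ldots,E_k)$ depends only on $E_1$ whenever $(E_1,\ldots,E_k)\in\langle Q\rangle^k$, and then apply the base case to the resulting coloring of single blocks.

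To accomplish the stabilization, I build by recursion blocks $Q_1<Q_2<\cdots$ and a sequence $P=P^0,P^1,P^2,\ldots$, each $P^i$ a coarsening of $P^{i-1}$. At stage $i$: let $Q_i$ be the first block of $P^{i-1}$. There are finitely many nonempty $S\sub\{1,\ldots,i\}$, each determining a candidate initial block $E_S:=\bigcup_{j\in S}Q_j$ for the final blocking $Q:=(Q_1,Q_2,\ldots)$. Apply the inductive hypothesis successively---once per $S$---to stabilize each coloring $(F_1,\ldots,F_{k-1})\mapsto f(E_S,F_1,\ldots,F_{k-1})$ on the blocks of $P^{i-1}$ lying after $Q_i$. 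This yields $P^i$, a coarsening of $P^{i-1}$, with first block $Q_{i+1}$ and with the property that $f(E_S,\cdot)$ is constant on $\langle P^i\rangle^{k-1}$ for every nonempty $S\sub\{1,\ldots,i\}$. Setting $Q=(Q_1,Q_2,\ldots)$, take any $E\in\langle Q\rangle^1$ and write $E=E_S$ with $i^*:=\max S$. Since $Q_{i^*+1},Q_{i^*+2},\ldots$ are by construction finite unions of blocks of $P^{i^*}$, any $(E_2,\ldots,E_k)\in\langle Q\rangle^{k-1}$ with $E<E_2$ lies in $\langle P^{i^*}\rangle^{k-1}$, so $f(E,E_2,\ldots,E_k)$ depends only on $E$. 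This defines $g:\langle Q\rangle^1\to\{1,\ldots,r\}$, and the base case applied to $g$ produces $Q'\in\langle Q\rangle^\omega$ on which $g$ is constant; then $f$ is constant on $\langle Q'\rangle^k$.

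The main obstacle is orchestrating the fusion so that infinitely many applications of the inductive hypothesis cohere into a single $Q\in\langle P\rangle^\omega$. The key observation that makes the bookkeeping manageable is that each $E\in\langle Q\rangle^1$ involves only finitely many of the $Q_j$, namely those indexed by some $S\sub\{1,\ldots,i^*\}$, so the stabilization carried out at stage $i^*$ already handles $f(E,\cdot)$; committing to $Q_i$ as the first block of $P^{i-1}$ \emph{before} any further coarsening is precisely what guarantees that the tail $(Q_{i^*+1},Q_{i^*+2},\ldots)$ remains a coarsening of $P^{i^*}$, so the stabilization genuinely applies to every $(k-1)$-tuple in $\langle Q\rangle^{k-1}$ past $E$.
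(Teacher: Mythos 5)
The paper cites the Milliken-Taylor theorem without proof, remarking only that it follows from Hindman's theorem the same way Ramsey's theorem follows from the pigeonhole principle. Your induction on $k$---Hindman as the $k=1$ base case, a fusion argument in the inductive step to reduce to a coloring of single blocks, and a final application of Hindman to that coloring---is a correct and complete realization of exactly that remark.
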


This theorem clearly generalizes Ramsey's original theorem by taking the functions $f$ in the Milliken-Taylor theorem to be those which depend only on the minimum elements of the $k$ sets forming some $(E_i)\in\langle P\rangle^k$. Notice that if we take $k=1$ in Ramsey's original theorem, we get the so-called pigeonhole principle. Similarly, if we take $k=1$ and $P=\N$ in the Milliken-Taylor theorem, we get Hindman's theorem because $\langle\N\rangle^1=[\N]^{<\omega}$ and $\langle Q\rangle^1$ is the collection of sets which are finite unions from $Q$. In fact, the proof of the Milliken-Taylor theorem follows from Hindman's theorem precisely the same way that Ramsey's theorem follows from the pigeonhole principle.

\section{The Assumption Every Block Tree Has a Good Branch} \label{Sec:GoodBranches}

Throughout the first portion of this section, we fix a Banach space $X$ with a basis $(x_i)$ and assume that $(x_i)$ has the relevant property: every normalized block tree on $X$ with respect to $(x_i)$ has a good branch. The next two propositions will show that such a space has an isometrically unique spreading model. To prove them, it is useful to make the following provisional definition.

\begin{definition}\label{Def:blockfromarray}
	Let $(\mathbf{y}^n)_{n=1}^\infty$ be a 2-dimensional array such that every row $\mathbf{y}^n=(y_i^n)_{i=1}^\infty$ of the array is a block basic sequence of $(x_i)$. We define the \emph{block tree based on} $(\mathbf{y}^n)$ as follows: First $x_{\es}=\es$. Now assume that $x_A$ has been defined for some $A\in[\N]^{<\omega}$ and that $x_A=y_i^n$ for some vector in the array. Let $m\in\N$ be least such that $\supp(y_{m+1}^{n+1})>\supp(x_A)$. Then put, for $k\in\N$, 
	\[
	x_{A\cup\{\max A+k\}}=y_{m + k}^{n+1}.
	\]
	That is, the sequence of nodes below $A$ are all the block vectors from the next row of the array whose supports come after $y^n_i$.
\end{definition}

Since the collection of successors of each node is the tail of one of the block basic sequences $(y^n_i)$, the countably splitting tree $(x_A)$ is indeed a block tree. Furthermore, note that any branch of the tree $(x_A)$ is a block basic sequence of $(x_i)$. 

\begin{proposition}\label{Prop:mustbelp2}
	Assume that $(x_i)$ and $X$ are as stated. Then there exists a good block basic sequence of $(x_i)$ which generates a spreading model 1-equivalent to the unit vector basis of $c_0$ or to $\ell_p$ for some $1\leq p<\infty$.
\end{proposition}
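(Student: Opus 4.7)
The strategy is to combine Krivine's theorem with iterated Brunel-Sucheston selection to build an array of nested, good, Krivine-type block basic sequences; apply the branch hypothesis to the block tree of Definition \ref{Def:blockfromarray} based on this array; and evaluate the spreading model of the resulting good branch.

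Using Theorem \ref{Thrm:krivine}, fix a Krivine exponent $p \in [1,\infty]$ for $(x_i)$; after passing to a subsequence if necessary, one may assume $p$ remains a Krivine exponent for every block basic sequence considered below. Interleaving Krivine with Brunel-Sucheston (Theorem \ref{Thrm:spreadingmodel}) to preserve good-ness, I would construct block basic sequences $v^1, v^2, \ldots$ of $(x_i)$ so that $v^{n+1}$ is a block basic sequence of $v^n$, each $v^n$ is good, and any $n$ elements of $v^n = (v^n_i)_i$ are $(1+1/n)$-equivalent to the unit vector basis of $\ell_p^n$; the blocking passing from $v^n$ to $v^{n+1}$ may additionally be taken with bounded group size. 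Set $\mathbf{y}^n = (v^n_i)_i$ and form the block tree $\sT$ based on this array. The branch hypothesis supplies a good branch $(z_i)$ of $\sT$; tracing through the recursive definition, $z_i$ lies in row $i$, and by the nesting $z_i$ is a normalized block vector of $v^n$ whenever $i \geq n$, with total support in $v^n$ controlled by the blocking.

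To identify the spreading model of $(z_i)$, fix $m \in \N$ and $(a_j)_{j=1}^m \in [-1,1]^m$. Because $(z_i)$ is good, $\lim_{k_1 \to \infty}\|\sum a_j z_{k_j}\|$ exists uniformly over choices $k_1 < \cdots < k_m$, so it suffices to evaluate the limit for the consecutive pattern $k_j = k_1 + j - 1$. For this pattern and $k_1$ large, $(z_{k_j})_{j=1}^m$ consists of $m$ normalized block vectors of $v^{k_1}$ whose total support in $v^{k_1}$ is bounded by a constant depending only on $m$. The spreading model of $v^{k_1}$ is $1$-spreading and, by the Krivine property, $(1+1/k_1)$-equivalent to $\ell_p$ on any $k_1$ coordinates; a short $1$-spreading computation then shows that $m$ normalized block vectors of total support at most $k_1$ are $(1+1/k_1)^2$-equivalent to the unit vector basis of $\ell_p^m$ in the spreading-model norm, and good-ness of $v^{k_1}$ transfers this estimate to the actual norm with an error that vanishes as supports move out. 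Sending $k_1 \to \infty$ makes both approximations exact, yielding $\lim_{k_1 \to \infty}\|\sum a_j z_{k_j}\| = \|\sum a_j u_j\|_{\ell_p^m}$; by the uniformity of good-ness this identifies the spreading-model norm of $(z_i)$ as $\|\cdot\|_{\ell_p^m}$ on every $m$-tuple. Hence the spreading model of $(z_i)$ is $1$-equivalent to the unit vector basis of $\ell_p$ when $p < \infty$ and to that of $c_0$ when $p = \infty$ (the limit norm being the sup norm, and the spreading model living in a separable Banach space).

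The main technical obstacle I expect is the combined bookkeeping of these iterative refinements: stabilizing the Krivine exponent across all block basic subsequences considered, preserving good-ness through the nesting, and bounding the blocking group sizes so that the total support of $(z_{k_j})_{j=1}^m$ in $v^{k_1}$ stays uniformly controlled as $k_1 \to \infty$, allowing the spreading-model estimates on $v^{k_1}$ to carry uniformly to block vectors formed along the branch.
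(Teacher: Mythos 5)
Your approach diverges from the paper's in a way that introduces several serious gaps. The paper does not use nested Krivine sequences at all. It applies Krivine's theorem once per $n$ to get (unrelated) block basic sequences $\mathbf{y}^n$ of $(x_i)$ whose $n$-tuples are $(1+\e_n)$-equivalent to $\ell_p^n$, and then forms the array $(\mathbf{y}^1,\mathbf{y}^2,\mathbf{y}^2,\mathbf{y}^3,\mathbf{y}^3,\mathbf{y}^3,\ldots)$ in which row $\mathbf{y}^m$ is repeated $m$ times. Along any branch of the resulting block tree, for each $n$ and each $m\geq n$, there are $n$ \emph{consecutive} branch vectors lying in a single row $\mathbf{y}^m$; these are $(1+\e_m)$-equivalent to $\ell_p^n$, and since the branch is good its spreading model is forced to be $1$-equivalent to the unit vector basis of $\ell_p$. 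The whole argument is just Krivine plus the repetition trick plus the good-branch hypothesis.

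Your version replaces this repetition with a nested family $v^1 \supseteq v^2 \supseteq \cdots$ of good, $\ell_p$-Krivine block basic sequences, so that a length-$m$ consecutive window of the branch is spread across $m$ distinct rows rather than sitting inside one row. Making that work requires three things you assert but do not establish, and at least one of them fails in general. First, you invoke ``after passing to a subsequence if necessary, one may assume $p$ remains a Krivine exponent for every block basic sequence considered below.'' The Krivine set can genuinely shrink under passing to block basic sequences (this is the subject of \cite{OdeSch3}), so producing a nested chain with a fixed Krivine $p$ is a nontrivial stabilization argument, not a free diagonalization. Second, and more fatally, you need the blocking from $v^n$ to $v^{n+1}$ to have ``bounded group size'' so that $m$ consecutive branch vectors have total support $\leq k_1$ in $v^{k_1}$. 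Theorem \ref{Thrm:krivine} gives no such control; in general the block sizes needed to realize $(1+\e)$-equivalence to $\ell_p^n$ grow with $n$, and if the group sizes grow at all, the total support of $(z_{k_1},\dots,z_{k_1+m-1})$ in $v^{k_1}$ compounds multiplicatively and quickly outstrips $k_1$, so the Krivine estimate you want to apply to $v^{k_1}$ is simply not available. Third, your transfer from the spreading-model norm of $v^{k_1}$ back to the actual norm needs the branch vectors' supports to sit far out \emph{as elements of} $v^{k_1}$; the block-tree construction only pushes supports far out in the underlying basis $(x_i)$, and there is no reason the index of $z_{k_1}$ inside the sequence $v^{k_1}$ grows with $k_1$. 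The missing idea relative to the paper is precisely the row repetition: repeating $\mathbf{y}^m$ exactly $m$ times hands you $n$ consecutive branch vectors from a single Krivine row for free, which removes any need for nesting, Krivine-$p$ stabilization, bounded blockings, or spreading-model transfer.
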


\begin{proof}
	Let $(\e_n)$ be a sequence of positive real numbers converging to zero. By Krivine's Theorem, Theorem \ref{Thrm:krivine}, there exists a $1\leq p\leq \infty$ such that for all $n$ there are block subsequences $\mathbf{y}^n=(y_i^n)$ such that for any $n$-tuple $(i_1,\ldots,i_n)$ we have $(y_{i_k}^n)_{k=1}^n$ is $(1+\e_n)$-equivalent to $\ell_p^n$. Define the new array $(\mathbf{z}^n)$ consisting of the rows $\mathbf{y}^n$ repeating $n$ times; that is, our array $(\mathbf{z}^n)$ consists of the rows 
	\[
	(\mathbf{y}^1,\mathbf{y}^2,\mathbf{y}^2,\mathbf{y}^3,\mathbf{y}^3,\mathbf{y}^3,\mathbf{y}^4,\mathbf{y}^4,\mathbf{y}^4,\mathbf{y}^4,\ldots).
	\]
	Let $(x_A)$ be the block tree based on $(\mathbf{z}^n)$. By our assumption, this tree has a good branch. This branch must have a spreading model 1-equivalent to the unit vector basis of $\ell_p$ (or $c_0$ in the case $p=\infty$). This is because, for a fixed $n$, $n$ many vectors which are $(1+\e_n)$-equivalent to $\ell_p^n$ must appear in the branch infinitely often which implies that the first $n$ many vectors of the spreading model are $(1+\e_n)$-equivalent to the unit vector basis of $\ell_p^n$. Since $n$ is arbitrary, the result follows.
\end{proof}

\begin{proposition}\label{Prop:mustbelp3}
	Assume that $(x_i)$ and $X$ are as stated. Then there exists a $1\leq p\leq \infty$ such that every spreading model of a block basic sequence of $(x_i)$ must be 1-equivalent to the unit vector basis of $\ell_p$  (or $c_0$ in the case $p=\infty$).
\end{proposition}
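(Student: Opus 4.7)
The plan is to argue by contradiction using Proposition \ref{Prop:mustbelp2}, Krivine's theorem, and the block-tree construction of Definition \ref{Def:blockfromarray}. Let $p$ be the value obtained in Proposition \ref{Prop:mustbelp2}. Assume for contradiction that there is a good block basic sequence $(u_i)$ of $(x_i)$ whose spreading model $(e_i)$ is \emph{not} $1$-equivalent to the unit vector basis of $\ell_p$ (or $c_0$). Then there is some $n_0\in\N$ and some $\d>0$ so that $(e_i)_{i=1}^{n_0}$ fails to be $(1+\d)$-equivalent to $\ell_p^{n_0}$.

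I would first collect the two competing pieces of data. Fix a sequence $\e_n\downarrow 0$ with $\e_n<\d/3$ for $n\geq n_0$. By Krivine's theorem (Theorem \ref{Thrm:krivine}), for each $n$ choose a block basic sequence $\mathbf{y}^n=(y_i^n)_i$ of $(x_i)$ such that any $n$ vectors of $\mathbf{y}^n$ are $(1+\e_n)$-equivalent to the unit vector basis of $\ell_p^n$. Since $(u_i)$ generates $(e_i)$, for each $n$ choose $N_n\in\N$ such that $(u_{k_i})_{i=1}^n$ is $(1+\e_n)$-equivalent to $(e_i)_{i=1}^n$ whenever $N_n\leq k_1<\cdots<k_n$, and let $\mathbf{u}^n$ denote the tail $(u_i)_{i\geq N_n}$.

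Next, I would assemble an array $(\mathbf{z}^m)_{m=1}^\infty$ whose $n$-th ``block'' of rows consists of $n$ copies of $\mathbf{y}^n$ immediately followed by $n$ copies of $\mathbf{u}^n$, for $n=1,2,3,\ldots$. Each row is a block basic sequence of $(x_i)$, so the block tree $(x_A)$ based on $(\mathbf{z}^m)$ (Definition \ref{Def:blockfromarray}) is a normalized block tree. By hypothesis it has a good branch $(v_i)$ with spreading model $(f_i)$. The key observation is that whenever rows $m,m+1,\ldots,m+n-1$ come from the \emph{same} underlying sequence $\mathbf{y}$, the successor rule in Definition \ref{Def:blockfromarray} forces the corresponding branch vectors $v_m,\ldots,v_{m+n-1}$ to be a subsequence of $\mathbf{y}$. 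Since for each $n\geq n_0$ and each depth $D$ there is some $N\geq n$ such that the $N$-th block of $\mathbf{y}^N$-rows in the array starts past depth $D$, the branch contains, arbitrarily far out, a window of $n$ consecutive vectors that are $(1+\e_N)$-equivalent to $\ell_p^n$. Combining this with the spreading-model estimate for $(f_i)$ forces $(f_i)_{i=1}^n$ to be $1$-equivalent to $\ell_p^n$ for every $n$. The same reasoning applied to the $\mathbf{u}^n$-blocks (where every window of $n$ consecutive vectors past the $n$-th block is $(1+\e_n)$-equivalent to $(e_i)_{i=1}^n$) forces $(f_i)_{i=1}^n$ to be $1$-equivalent to $(e_i)_{i=1}^n$. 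Thus $(e_i)$ is $1$-equivalent to the unit vector basis of $\ell_p$ (or $c_0$), contradicting the choice of $(u_i)$.

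The main technical point to verify carefully is the ``consecutive rows $\Rightarrow$ subsequence'' claim in Definition \ref{Def:blockfromarray}: if $x_A=y_i^n$ and row $n+1$ equals row $n$, then the least $m$ with $\supp(y_{m+1}^{n+1})>\supp(x_A)$ is exactly $i$, so the successors of $x_A$ are $y_{i+1}^n,y_{i+2}^n,\ldots$; iterating shows that any $n$ consecutive branch vectors lying in a run of identical rows form a subsequence of that row. The only other subtle point is quantifier order in the spreading-model estimate, but since both the $\mathbf{y}^N$-windows and the $\mathbf{u}^n$-windows occur arbitrarily deep in the branch, one can apply the definition of spreading model with $(a_i)$ rational in $[-1,1]$ and pass to the limit to conclude $1$-equivalence on both sides.
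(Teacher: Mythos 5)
Your proof is correct, and it arrives at the same conclusion, but the route is genuinely different from the paper's and is considerably more elaborate than it needs to be. The paper takes $p$ from Proposition \ref{Prop:mustbelp2} and, crucially, also takes the good block basic sequence $(y_i)$ with spreading model $1$-equivalent to $\ell_p$ that Proposition \ref{Prop:mustbelp2} provides; there is no need to reinvoke Krivine. It then fixes a single $m$ and a single set of scalars $(a_i)_{i=1}^m$ witnessing that the hypothetical bad sequence $(z_i)$ has a spreading model off from $\ell_p$, and builds the simplest possible array: $m$ rows of $(y_i)$, $m$ rows of $(z_i)$, repeated. For any branch $(v_i)$ of the resulting block tree, $\|\sum_{i=1}^m a_i v_{k_i}\|$ is repeatedly close to $1$ (on $(y_i)$-windows) and repeatedly bounded away from $1$ (on $(z_i)$-windows), so the limit fails to exist and the branch cannot be good, directly contradicting the hypothesis. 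No spreading model of the branch needs to be identified.

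By contrast, you build Krivine sequences $\mathbf{y}^n$ for every $n$ and an array with growing block sizes, then appeal to the hypothesis to extract a good branch $(v_i)$ with spreading model $(f_i)$, and argue that $(f_i)$ is forced to be $1$-equivalent to both $\ell_p$ and $(e_i)$. This works, but it carries extra baggage: you must track that the window lengths grow, that the constants $\e_N\to 0$, that any $n\leq N$ vectors of $\mathbf{y}^N$ inherit a $(1+\e_N)$-equivalence to $\ell_p^n$, and (implicitly) that the indices $j_1<\cdots<j_n$ of the $(u_i)$-vectors appearing in the branch tend to infinity (which follows from block-support growth, but should be stated). You also don't actually use the conclusion of Proposition \ref{Prop:mustbelp2}, only its $p$, which makes the reference a little misleading. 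The paper's version avoids all of this by fixing one $m$ and one $(a_i)$ from the outset and showing the tree has no good branch at all, which is both shorter and cleaner, and is the form of argument that gets reused in the proof of Theorem \ref{Thrm:1asymplp}. If you keep your version, you should at least add the observation that the "consecutive identical rows give a subsequence of that row" lemma is exactly what the paper's simpler array exploits with $m$ in place of $n$, and make explicit that the $\mathbf{u}^N$-windows are subsequences of $(u_i)$ with first index tending to infinity.
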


\begin{proof}
	Let $1\leq p\leq\infty$ be as in Proposition \ref{Prop:mustbelp2}. We focus on the case $p<\infty$ and the proof for $p=\infty$ is similar. Let $(y_i)$ be a normalized block basic sequence of $(x_i)$ which has a spreading model 1-equivalent to the unit vector basis of $\ell_p$. Suppose there is some other good block basic sequence $(z_i)$ of $(x_i)$ whose spreading model is not 1-equivalent to the unit vector basis of $\ell_p$. This means that there exists an $\e>0$, a number $m\in\N$, and scalars $(a_i)_{i=1}^m$  such that $(\sum_{i=1}^m |a_i|^p)^{1/p}=1$ but
	\[
	\lim_{k_1\to\infty}\Big\|\sum_{i=1}^m a_i z_{k_i}\Big\|\geq1+\e \quad \text{or} \quad \lim_{k_1\to\infty}\Big\|\sum_{i=1}^m a_i z_{k_i}\Big\|\leq1-\e.
	\]
	Fix such an $\e$, $m$, and $(a_i)$. Define the array $(\mathbf{w}^n)$ by 
	\[
	\mathbf{w}^n = \left\{
	\begin{array}{ll}
	(y_i) & \text{if there is a $k$ even with $km+1\leq n\leq (k+1)m$} \\
	(z_i) & \text{if there is a $k$ odd with $km+1\leq n\leq (k+1)m$} 
	\end{array}
	\right.
	\]
	In other words, $(\mathbf{w}^n)$ consists of $m$ many rows of $(y_i)$, then $m$ many rows of $(z_i)$, then $m$ many rows of $(y_i)$, and so on. Let $(x_A)$ be the block tree based on the array $(\mathbf{w}^n)$. We claim this block tree has no good branches, a contradiction to our assumption concerning $(x_i)$. Let $(v_i)$ be a normalized block basic sequence of $(x_i)$ which is a branch of $(x_A)$. Then we have that for infinitely many increasing $m$-tuples $(k_1,\ldots,k_m)$ 
	\[
	1-\frac{\e}{2}\leq \| \sum_{i=1}^m a_i v_{k_i} \|\leq 1+\frac{\e}{2}
	\]
	and that for infinitely many increasing $m$-tuples $(k_1,\ldots,k_m)$ 
	\[
	\Big\| \sum_{i=1}^m a_i v_{k_i} \Big\|\geq1+\e \quad \text{or} \quad \Big\| \sum_{i=1}^m a_i v_{k_i} \Big\|\leq1-\e
	\]
	which implies that the limit $\lim_{k_1\to\infty}\| \sum_{i=1}^m a_i v_{k_i} \|$ cannot exist and $(v_i)$ is not good.
\end{proof}

\begin{remark}\label{Rem:spreadingmodelproblem}
In \cite{OdeSch2}, Odell and Schlumprecht showed that if $c_0$ or $\ell_1$ is the isometrically unique spreading model of a Banach space $X$, then $X$ contains an isomorphic copy of $c_0$ or $\ell_1$ (respectively). The analogous question for $1<p<\infty$ is open. In our case, where every normalized block tree has good branch, the existence of almost isometric copies of $c_0$ or $\ell_p$ in $X$ will follow in Corollary \ref{Cor:containlp}. 
\end{remark}

\begin{theorem}\label{Thrm:1asymplp}
	Assume that $(x_i)$ and $X$ are as stated. Let $1\leq p\leq \infty$ be as in Proposition \ref{Prop:mustbelp3}. Then $X$ is $1$-asymptotic $\ell_p$.
\end{theorem}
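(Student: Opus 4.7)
I would proceed by contradiction. Suppose some $n$-dimensional $E=[e_i]_{i=1}^n\in\{X\}_n$ fails to be $1$-equivalent to the unit vector basis $(f_i)_{i=1}^n$ of $\ell_p^n$. Then there exist scalars $(a_i)\in[-1,1]^n$ and $\eta>0$ with $\bigl|\|\sum a_i e_i\|_E-\|\sum a_i f_i\|_{\ell_p^n}\bigr|\geq\eta$. Fix $\e>0$ small enough that whenever $n$ normalized vectors $(y_i)_{i=1}^n$ are $(1+\e)$-equivalent to $(e_i)$, the quantity $\|\sum a_i y_i\|$ differs from $\|\sum a_i f_i\|_{\ell_p^n}$ by at least $\eta/2$.

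The central step is to build a normalized block tree $\sT$ on $X$ so that for every branch $(v_k)_{k=1}^\infty$ and every $s\geq 0$, the $n$-tuple $(v_{sn+1},\ldots,v_{(s+1)n})$ is $(1+\e)$-equivalent to $E$. Since $E\in\{X\}_n$, fix a winning strategy $\sigma$ for player two in the $n$-round asymptotic game forcing outcomes $(1+\e)$-equivalent to $E$. Construct $\sT$ by induction on level, partitioning the levels into consecutive groups of $n$. On the first group (levels $1$ through $n$) the tree is populated by running one instance of $\sigma$: the successors of the root are $\sigma$'s replies to an increasing sequence of first moves by player one; for each such node its successors are $\sigma$'s replies to an increasing sequence of second moves; and so on through level $n$. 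At every stage player one's moves are chosen larger than the supports of the parent node and of any already defined siblings, which both makes each successor sequence a block basic sequence of $(x_i)$ and guarantees that the chain of vectors along any length-$n$ path is a genuine strategic run of $\sigma$, hence $(1+\e)$-equivalent to $E$. On the next group of levels $\sigma$ is restarted at every node at level $n$ and the procedure is repeated; iterating yields the required tree.

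By hypothesis $\sT$ has a good branch $(v_k)$. By Proposition~\ref{Prop:mustbelp3} the spreading model of $(v_k)$ is $1$-equivalent to the unit vector basis of $\ell_p^n$ (or $c_0^n$ when $p=\infty$), so $\lim_{s\to\infty}\|\sum_{i=1}^n a_i v_{sn+i}\|$ exists and equals $\|\sum a_i f_i\|_{\ell_p^n}$. On the other hand every block $(v_{sn+1},\ldots,v_{(s+1)n})$ is $(1+\e)$-equivalent to $E$, so by our choice of $\e$ each $\|\sum a_i v_{sn+i}\|$ is at distance at least $\eta/2$ from $\|\sum a_i f_i\|_{\ell_p^n}$. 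This contradicts the existence of the limit, so every $E\in\{X\}_n$ must be $1$-equivalent to $\ell_p^n$ and $X$ is $1$-asymptotic $\ell_p$.

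The main obstacle is organizing the tree construction so that the successors of every node form a block basic sequence of $(x_i)$ and so that each consecutive group of $n$ levels along any branch arises as a legitimate run of $\sigma$. Both are handled by the fact that $\sigma$ wins regardless of how player one plays: this lets one freely choose player one's move at each sibling to be large enough to preserve block form, and it lets one relaunch $\sigma$ at the start of every new group of $n$ levels from whatever tail subspace the previous supports dictate.
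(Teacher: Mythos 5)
Your proof is correct, and it reaches the contradiction by a slightly cleaner route than the paper. Both arguments construct a normalized block tree out of runs of a winning strategy $\sigma$ for player two in the $n$-round asymptotic game for the offending $E\in\{X\}_n$, with player one's moves chosen large enough to preserve block structure exactly as you describe. The paper's tree additionally interleaves $m$-blocks from a reference good sequence $(y_i)$ whose spreading model is $1$-equivalent to $\ell_p$ (furnished by Proposition~\ref{Prop:mustbelp2}) with $m$-blocks of game outcomes, and then shows directly that $\|\sum a_i v_{k_i}\|$ oscillates along every branch---near the $\ell_p$ value on the $(y_i)$ stretches and away from it on the game stretches---so no branch can be good. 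You instead fill the whole tree with consecutive runs of $\sigma$ and then invoke Proposition~\ref{Prop:mustbelp3}: any good branch would be forced to have spreading model $1$-equivalent to $\ell_p$, which is incompatible with every consecutive $n$-tuple being $(1+\e)$-close to $E$. This dispenses with the interleaving at the modest cost of one more appeal to Proposition~\ref{Prop:mustbelp3} (already invoked in the theorem's statement to fix $p$, so certainly in scope). The underlying technique---encoding asymptotic game runs into a block tree and using the goodness hypothesis to force a contradiction---is shared by both, but your version is a genuine and tidy simplification of the argument.
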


\begin{proof}
	Let $(y_i)$ be any good block basic sequence of $(x_i)$ which generates a spreading model $1$-equivalent to the unit vector basis of $\ell_p$. Suppose that $X$ is not $1$-asymptotic $\ell_p$. Then there exists an $m\in\N$ and $\e>0$ so that some $(e_i)_{i=1}^m\in\{X\}_n$ is not $(1+\e)$ equivalent to $\ell_p^m$. Fix a strategy for player two in the game associated with $(e_i)$ and $\e/2$. We define a normalized block tree $(x_A)$ by defining it on each level; that is, by induction on $|A|$. This block tree will be such that any branch through it has the form
	\begin{equation}\label{Eq:form}
	(y_{k_1},\ldots,y_{k_m},z_1^1,\ldots,z_m^1,y_{k_{m+1}},\ldots,y_{k_{2m}},z_1^2,\ldots,z_m^2,y_{k_{2m+1}},\ldots)
	\end{equation}
	where $(y_{k_i})$ is a subsequence of $(y_i)$ and where for all $k\in\N$ $(z_i^k)_{i=1}^m$ is the outcome of a game where player two uses the strategy above.
	
	Let $x_\es=\es$ as usual, and then assume that $x_A$ has been defined. 
	First, consider the case where the successors of $x_A$ will be on the $n$th level where $n$ satisfies  $\exists k \text{ even } (km+1\leq n\leq (k+1)m)$. In this case, let $p$ be least such that $\supp(y_{p+1})>\supp(x_A)$ and then put $x_{A\cup\{\max A +k\}}=y_{p+k}$
	for $k\in\N$. Next, consider the case where the successors of $x_A$ will be on the $n$th level where $n$ satisfies $\exists k \text{ odd } (km+1\leq n\leq (k+1)m)$. If $n=km+1$ specifically, then define $x_{A\cup\{\max A+1\}}$ to be player two's response when player one begins with the move $\max(\supp x_A)+1$. Then define $x_{A\cup\{\max A+k\}}$ to be player two's response when player one begins with the move $\max(\supp x_{A\cup\{\max A +k-1\}})+1$. If $n>km+1$, then we may inductively assume the presence of a partial run of the game ending with the move $x_A$ played by player two.  Define $x_{A\cup\{\max A+1\}}$ to be player two's response when player one continues the run of the game with the move $\max(\supp x_A)+1$. Next, define $x_{A\cup\{\max A+k\}}$ to be player two's response when player one continues the run of the game with $\max(\supp x_{A\cup\{\max A +k-1\}})+1$. This defines $(x_A)$ so that every branch through $(x_A)$ will have the form of (\ref{Eq:form}).
	
	We claim that $(x_A)$ has no good branches, a contradiction to our assumption concerning $(x_i)$. Let $(v_i)$ be a normalized block basic sequence of $(x_i)$ which is a branch of $(x_A)$. Since $(e_i)$ is not $(1+\e)$-equivalent to $\ell_p^n$, there exists scalars $(a_i)$ such that $(\sum_{i=1}^m |a_i|^p)^{1/p}=1$, but
	\[
	\Big\| \sum_{i=1}^m a_i e_{i} \Big\|>1+\e \quad \text{or} \quad \Big\| \sum_{i=1}^m a_i e_{i} \Big\|<\frac{1}{1+\e}.
	\]
	By form (\ref{Eq:form}), there are infinitely many increasing $m$-tuples $(k_1,\ldots,k_m)$ such that $(v_{k_1},\ldots,v_{k_m})$ is $1+\e/2$ equivalent to $(e_i)$ and so infinitely increasing $m$-tuples $(k_1,\ldots,k_m)$ such that
	\[ 
	\Big\| \sum_{i=1}^m a_i v_{k_i} \Big\|>\frac{1+\e}{1+\e/2}\geq1+\d \quad \text{or} \quad \Big\| \sum_{i=1}^m a_i v_{k_i} \Big\|<\frac{1+\e/2}{1+\e}\leq 1-\d
	\]
	for some $\d>0$. Also, by form (\ref{Eq:form}), there are infinitely many successive increasing $m$-tuples $(k_1,\ldots,k_m)$ such that $(v_{k_1},\ldots,v_{k_m})$ is a subsequence of $(y_i)$ and so infinitely many increasing $m$-tuples $(k_1,\ldots,k_m)$ such that
	\[
	1-\d/2\leq\Big\| \sum_{i=1}^m a_i v_{k_i} \Big\|\leq 1+\d/2
	\]
	because the spreading model of $(y_i)$ is $1$-equivalent to the unit vector basis $\ell_p$. It follows that the limit $\lim_{k_1\to\infty}\| \sum_{i=1}^m a_i v_{k_i} \|$ cannot exist and that $(v_i)$ is not good. Since this is a contradiction, we conclude that $X$ must be $1$-asymptotic $\ell_p$.	
\end{proof}

By applying Theorem \ref{Thrm:MilTom}, one may show that a $1$-asymptotic $\ell_p$ space contains almost isometric copies of $\ell_p$. This is done in \cite{MilTom}. Hence, we obtain the following corollary.

\begin{corollary}\label{Cor:containlp}
	Assume that $(x_i)$ and $X$ are as stated. Let $1\leq p\leq \infty$ be as in Proposition \ref{Prop:mustbelp3}. Then, for all $\e>0$, $X$ contains a sequence which is $(1+\e)$-equivalent to the unit vector basis of $\ell_p$ (or $c_0$ in the case $p=\infty$).
\end{corollary}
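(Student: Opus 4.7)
The plan is to combine the Milman--Tomczak-Jaegermann stabilization theorem (Theorem \ref{Thrm:MilTom}) with Theorem \ref{Thrm:1asymplp}; together these imply that in a suitable stabilizing subspace, block vectors chosen sufficiently far out behave almost isometrically like $\ell_p^n$, and then a diagonal construction produces the desired infinite $(1+\e)$-$\ell_p$ basic sequence. The detailed execution is the one carried out in \cite{MilTom}.

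First, let $Z\subseteq X$ be a stabilizing subspace with basis $(z_i)$ provided by Theorem \ref{Thrm:MilTom}. Since $X$ is $1$-asymptotic $\ell_p$, every $E\in\{X\}_n$ is $1$-equivalent to the unit vector basis of $\ell_p^n$ (or $c_0^n$ when $p=\infty$). Combining the two yields the uniform statement: for each $n\in\N$ and $\delta>0$ there is an integer $N(n,\delta)$ such that every normalized block basic sequence $(y_i)_{i=1}^n$ of $(z_i)$ with $\min\supp(y_1)>N(n,\delta)$ is $(1+\delta)$-equivalent to the unit vector basis of $\ell_p^n$. We may assume $N$ is nondecreasing in $n$ and nonincreasing in $\delta$.

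Fix positive reals $(\delta_k)$ with $\prod_k(1+\delta_k)<1+\e$, and inductively construct a normalized block basic sequence $(w_k)$ of $(z_i)$ satisfying $\min\supp(w_k)>N(k,\delta_k)$. For every $k$ and every $k$-tuple $(w_{i_1},\dots,w_{i_k})$ with $i_1\geq k$ one has $\min\supp(w_{i_1})\geq\min\supp(w_k)>N(k,\delta_k)$, so by the previous paragraph the tuple is $(1+\delta_k)$-equivalent to the unit vector basis of $\ell_p^k$.

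The main obstacle is promoting these ``finite tuples far out'' estimates to an \emph{infinite} $(1+\e)$-$\ell_p$ equivalence for a single sequence: the naive attempt of applying the MT-J bound directly to the initial segment $(w_1,\dots,w_N)$ fails, since it would require $\min\supp(w_1)>N(N,\delta_N)$ for every $N$, which cannot be arranged for a fixed $w_1$. The remedy carried out in \cite{MilTom} is a telescoping argument: one peels off leading terms of a finite linear combination $\sum_{k=1}^M a_k w_k$ one at a time, at each stage invoking the MT-J estimate on a tail whose starting index is large enough to satisfy the monotonicity condition and costing at most a factor of $(1+\delta_k)$; the accumulated factor over all stages is bounded by the convergent product $\prod_k(1+\delta_k)<1+\e$, uniformly in $M$. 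Passing to the limit shows that $(w_k)$ (or, if needed, a further subsequence) is $(1+\e)$-equivalent to the unit vector basis of $\ell_p$ (or $c_0$ when $p=\infty$), giving the desired almost isometric copy inside $X$.
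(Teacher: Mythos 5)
Your proposal follows the same route as the paper: the paper's proof of this corollary is a direct citation of \cite{MilTom}, and what you describe --- combining the stabilizing subspace from Theorem \ref{Thrm:MilTom} with the $1$-asymptotic $\ell_p$ conclusion of Theorem \ref{Thrm:1asymplp}, then running a telescoping argument that peels off one coordinate at a time and accrues a total factor $\prod_k(1+\delta_k)<1+\e$ --- is precisely the argument carried out there. You correctly identify the key obstacle (a fixed $w_1$ cannot lie beyond $N(N,\delta_N)$ for every $N$) and the standard remedy, so the sketch is sound.
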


The converse of Theorem \ref{Thrm:1asymplp} also holds. For this direction, we will use the characterization given by Theorem \ref{Thrm:subspaceplayer}.

\begin{theorem}\label{Thrm:converse1}
	Assume that $X$ with basis $(x_i)$ is $1$-asymptotic $\ell_p$ for some $1\leq p\leq\infty$. Then every normalized block tree on $X$  with respect to $(x_i)$ has a good branch.
\end{theorem}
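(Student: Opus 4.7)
The plan is to produce a good branch through a diagonal construction driven by the subspace-winning characterization of the asymptotic structure. First, from the hypothesis that $X$ is $1$-asymptotic $\ell_p$ together with Theorem \ref{Thrm:subspaceplayer}, I will show that for each $n\in\N$ and each $\e>0$ the collection $\sC_{n,\e}$ of normalized $n$-dimensional bases $(1+\e)$-equivalent to the unit vector basis of $\ell_p^n$ is subspace winning. Indeed, if it were not, a standard duality/compactness argument in the Banach--Mazur compactum applied to the game of Definition \ref{Def:subspaceplayer} would produce an asymptotic space $E\in\{X\}_n$ failing $(1+\e)$-equivalence to $\ell_p^n$, contradicting $1$-asymptoticity. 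This will yield, for each pair $(n,\e)$, a player-one strategy $\sigma_{n,\e}$ given by functions $\sigma_{n,\e}^{(i)}(y_1,\ldots,y_{i-1})\in\N$ for $i=1,\ldots,n$, such that whenever $(y_i)_{i=1}^n$ is a normalized block basic sequence of $(x_i)$ with $\min\supp(y_i)\geq\sigma_{n,\e}^{(i)}(y_1,\ldots,y_{i-1})$, the tuple is $(1+\e)$-equivalent to the unit vector basis of $\ell_p^n$.

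Next I enumerate pairs $(k_m,\e_m)_{m=1}^\infty$ ranging over $\N\times\{1/l:l\in\N\}$ so that for each $k$ and each $\e>0$ some $m$ satisfies $k_m=k$ and $\e_m<\e$, and write $\sigma_m$ for $\sigma_{k_m,\e_m}$. Given a normalized block tree $\sT=(x_A)$ on $X$, I build a branch $(v_n)=(x_{A_n})$ by induction on $n$. At step $n$ the sequence of nodes below $A_{n-1}$ is a block basic sequence of $(x_i)$, so its supports tend to infinity; choose $v_n$ from this sequence with
\[
\min\supp(v_n)\ \geq\ \max\bigl\{\sigma_m^{(j)}(v_{i_1},\ldots,v_{i_{j-1}}):m\leq n,\ 1\leq j\leq k_m,\ i_1<\cdots<i_{j-1}<n\bigr\},
\]
a finite maximum. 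The resulting sequence $(v_n)$ is then a block basic sequence of $(x_i)$ and a branch of $\sT$.

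Finally, I verify $(v_n)$ is good. Fix $k\in\N$, $\e>0$, and pick $m$ with $k_m=k$ and $\e_m<\e$. For any increasing $n_1<\cdots<n_k$ with $n_1\geq m$, the choice of $v_{n_j}$ at step $n_j$ (which considered the index $m$ and the subset $\{n_1,\ldots,n_{j-1}\}\sub\{1,\ldots,n_j-1\}$) forces $\min\supp(v_{n_j})\geq\sigma_m^{(j)}(v_{n_1},\ldots,v_{n_{j-1}})$ for every $j\leq k$. Hence $(v_{n_1},\ldots,v_{n_k})$ is an outcome of a run against $\sigma_m$ and is therefore $(1+\e_m)$-equivalent to the unit vector basis of $\ell_p^k$. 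Consequently, for any scalars $(a_i)_{i=1}^k\in[-1,1]^k$, the quantity $\|\sum_{i=1}^k a_i v_{n_i}\|$ differs from $\|(a_i)\|_{\ell_p^k}$ (or from $\|(a_i)\|_{c_0}$ when $p=\infty$) by a factor of at most $1+\e_m<1+\e$, so $\lim_{n_1\to\infty}\|\sum_{i=1}^k a_i v_{n_i}\|$ exists and equals this $\ell_p$-type norm. Thus $(v_n)$ is a good branch of $\sT$.

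The principal obstacle is the first step: translating the abstract hypothesis on $\{X\}_n$ into the concrete subspace-winning statement for every $\e>0$. Once that is in hand, the diagonal construction and the final verification are essentially routine adaptations of the Brunel--Sucheston technique to trees, exploiting that a normalized block tree always supplies successors with arbitrarily large supports below each node.
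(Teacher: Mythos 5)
Your proof is correct and follows essentially the same route as the paper: you extract player-one strategies from the subspace-winning characterization of $\{X\}_n$ (Theorem \ref{Thrm:subspaceplayer}) and then run a diagonal construction along the tree so that any increasing $k$-tuple of nodes starting far enough out is the outcome of a run against the appropriate strategy, hence $(1+\e)$-equivalent to $\ell_p^k$. The only difference is cosmetic bookkeeping: you index the games by an external enumeration of $(k,\e)$-pairs and require $n_1\geq m$, whereas the paper ties both the game length and the precision to the first index $i_1$ of the tuple and restricts to tuples with $m<i_1$.
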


\begin{proof}
	 Let $(x_A)_{A\in[\N]^{<\omega}}$ be a normalized block tree on $X$ with respect to $(x_i)$. We will inductively define the sequence of naturals $(k_i)$ so that the sequence $(x_{\{k_1,\ldots,k_n\}})_{n=1}^\infty$ is a good branch of $(x_A)$. Since $X$ is $1$-asymptotic $\ell_p$, Theorem \ref{Thrm:subspaceplayer} implies that for all $n\in\N$ we have winning strategies for player one to produce outcomes $(y_1,\ldots,y_n)$ which are $(1+\frac{1}{n})$-equivalent to $\ell_p^n$. To begin, let $k_1$ be player one's first move in the (trivial) strategy to produce $(y_1)$ which is $2$-equivalent to $\ell_p^1$.
	
	Assume that $k_i$ has been defined for $i<n$. Also inductively assume that for every $m<n$ and every increasing $m$-tuple $(k_{i_1},\ldots,k_{i_m})$ where $i_m<n$ and $m<i_1$ we have a partial run $(k_{i_1},x_{\{k_{i_1}\}},\ldots,k_{i_m}, x_{\{k_{i_1},\ldots,k_{i_m}\}})$ of the game where player one is following a strategy which will produce outcomes $(y_i)_{i=1}^{i_1}$ of length ${i_1}$ which are $(1+\frac{1}{i_1})$-equivalent to $\ell_p^{i_1}$. Since there are only finitely many such partial runs, let $M$ be the maximum of all of player one's responses in each of these games. Also, let $N$ be the first move of player one according to a strategy which will produce an outcome of length $n$ which is $(1+\frac{1}{n})$-equivalent to $\ell_p^n$. We now define $k_n$ to be the least number so that $\supp(x_{\{k_1,\ldots,k_n\}})\geq \max\{N,M\}$ and hence the least $k_n$ so that the next node in the tree is an appropriate move for player two in all of the games referenced above.
	
	Let $(y_i)$ be the block basic sequence $(x_{\{k_1,\ldots,k_n\}})_{n=1}^\infty$, the branch which is determined by the sequence $(k_i)$ inductively defined above. We show that $(y_i)$ is good. For this, let $(a_i)_{i=1}^n$ be scalars and $\e>0$. Let $\d=\e/(\sum_{i=1}^n|a_i|^p)^{1/p}$. By construction of $(y_i)$, any vectors $(y_{k_i})_{i=1}^n$ where $k_1\geq\max\{n,\frac{1}{\d}\}$ are the initial part of an outcome where player one has played according to a strategy which produces $k_1$ vectors which are $(1+\frac{1}{k_1})$-equivalent to the unit vector basis of $\ell_p^{k_1}$. In particular $(y_{k_i})_{i=1}^n$ is $(1+\d)$-equivalent to the unit vector basis of $\ell_p^n$. Thus, for such vectors we have
	\[
	\Big\|\sum_{i=1}^n a_i y_{k_i}\Big\|\leq(1+\d)\left(\sum_{i=1}^n|a_i|^p\right)^{1/p}\leq\left(\sum_{i=1}^n|a_i|^p\right)^{1/p}+\e
	\]
	and
	\begin{align*}
	\Big\|\sum_{i=1}^n a_i y_{k_i}\Big\|&\geq\frac{1}{1+\d}\left(\sum_{i=1}^n|a_i|^p\right)^{1/p} \\ & \geq(1-\d)\left(\sum_{i=1}^n|a_i|^p\right)^{1/p}\geq\left(\sum_{i=1}^n|a_i|^p\right)^{1/p}-\e.
	\end{align*}
	Therefore, $\lim_{k_1\to\infty}\|\sum_{i=1}^n a_i y_{k_i}\|=\left(\sum_{i=1}^n|a_i|^p\right)^{1/p}$ exists, and $(y_i)$ is good.
\end{proof}

\section{The Assumption Every Block Basic Sequence is Good} \label{Sec:AllGood}

In this section, we examine the even stronger assumption that $(x_i)$ is a basis for $X$ such that every normalized block basic sequence of $(x_i)$ is good. Based on the strength of Theorem \ref{Thrm:1asymplp}, one might think that this stronger assumption would actually characterize the unit vector bases of $c_0$ and $\ell_p$. However, this is not quite the case.

\begin{example}\label{Ex:allgood}
	Let $1<p\leq2$ and $(p_s)\sub[1,p)$ be an increasing sequence of real numbers converging up to $p$. Let $(n_s)$ be an increasing sequence of natural numbers. Consider the $\ell_p$ sum of the finite dimensional spaces $\ell_{p_{s}}^{n_s}$
	\[
	X=\left(\sum_{s=1}^\infty \ell_{p_s}^{n_s}\right)_p
	\]
	with the standard basis $(e_{i,j})_{i=1}^\infty\,_{j=1}^{n_i}$, ordered lexicographically, where for each $i\in\N$ the sequence $(e_{i,j})_{j=1}^{n_i}$ is the unit vector basis of $\ell_{p_i}^{n_i}$. We relabel this basis as simply $(f_i)$.
	
	\noindent\emph{Claim 1: Every normalized block basic sequence of $(f_i)$ is good.}
	
	Let $(y_i)$ be a normalized block basic sequence of $(f_i)$. Say, $y_i=\sum_{l\in E_i} b_l f_l$ where $(E_i)$ is a blocking of naturals. For each $i\in\N$, assume that $(G_r)_{r\in B_i}$ partitions $E_i=\supp(y_i)$ so that each $G_r$ is the portion of $\supp(y_i)$ which is in some $\ell_{p_{s}}^{n_{s}}$. Since $(y_i)$ is normalized, we have
	\begin{align}\label{Eq:norm1}
	\|y_i\|=\Big\|\sum_{l\in E_i} b_l f_l\Big\|=\Big\|\sum_{r\in B_i}\sum_{l\in G_r} b_l f_l\Big\| =\left(\sum_{r\in B_i}\Big(\sum_{l\in G_r} |b_l|^{p_{s_r}}\Big)^{p/p_{s_r}}\right)^{1/p}=1.
	\end{align}
	Let $(a_i)_{i=1}^n\in[-1,1]^n$ be scalars. To show that $(y_i)$ is good we must show the existence of  $\lim_{k_1\to\infty}\|\sum_{i=1}^n a_i y_{k_i}\|$. To do this, we show that
	\[
	\sum_{i=1}^n |a_i|^p\leq\Big\|\sum_{i=1}^n a_i y_{k_i}\Big\|^p\leq n^{\frac{p}{p_{s_0}}-1}\Big(\sum_{i=1}^n |a_i|^p\Big)
	\]
	whenever $\min(\supp(y_{k_1}))$ is in the portion of the basis corresponding to $\ell_{p_{s_0}}^{n_{s_0}}$. Since $n$ is fixed and $p_{s_0}\to p$ as $k_1\to\infty$, this shows that 
	\[
	\lim_{k_1\to\infty}\Big\|\sum_{i=1}^n a_i y_{k_i}\Big\|= \Big(\sum_{i=1}^n |a_i|^p\Big)^{1/p}. 
	\]	
	Fix $k_1<\cdots<k_n$. Let $x=\sum_{i=1}^n a_i y_{k_i}$. We have the partition of $\supp(x)$ into $\{E_{k_1},\ldots,E_{k_n}\}$. We may also partition $\supp(x)$ into $\{F_1,\ldots, F_m\}$ where each $F_j$ is the portion of $\supp(x)$ which is in some $\ell_{p_{s}}^{n_{s}}$. Then let $\{G_1,\ldots,G_N\}$ be the common refinement of these two partitions. For each $1\leq j\leq m$ there exists an $A_j\sub\{1,\ldots,N\}$ so that $F_j=\bigcup_{r\in A_j} G_r$, and for each $1\leq i\leq n$ there exists a $B_i\sub\{1,\ldots,N\}$ so that $E_{k_i}=\bigcup_{r\in B_i} G_r$. Thus,
	\[
	\supp(x)=\bigcup_{j=1}^m F_{j}=\bigcup_{j=1}^m\bigcup_{r\in A_j} G_r \quad\text{and}\quad \supp(x)=\bigcup_{i=1}^n E_{k_i}=\bigcup_{i=1}^n\bigcup_{r\in B_i} G_r.
	\]
	Now observe
	\begin{align}\label{Eq:triplesum}
		 \Big\|\sum_{i=1}^n a_i y_{k_i}\Big\|^p&=\Big\|\sum_{j=1}^m\sum_{r\in A_j} \sum_{l\in G_r} a_{i_r} b_l f_l \Big\|^p=\sum_{j=1}^m\left(\sum_{r\in A_j}\sum_{l\in G_r} |a_{i_r} b_l|^{p_{s_r}}\right)^{p/p_{s_r}}\\
		&=\sum_{j=1}^m\left(\sum_{r\in A_j}|a_{i_r}|^{p_{s_r}}\sum_{l\in G_r} |b_l|^{p_{s_r}}\right)^{p/p_{s_r}}. \notag
	\end{align}
	Note that $\frac{p}{p_{s_r}}\geq 1$ for all $r$ and so $(x+y)^{p/p_{s_r}}\geq x^{p_{s_r}}+y^{p_{s_r}}$ for all nonnegative $x,y\in\R$. Therefore, (\ref{Eq:triplesum}) is
	\begin{align*}
		&\geq \sum_{j=1}^m\sum_{r\in A_j}|a_{i_r}|^p\Big(\sum_{l\in G_r} |b_l|^{p_{s_r}}\Big)^{p/p_{s_r}} =\sum_{i=1}^n\sum_{r\in B_i}|a_{i}|^p\Big(\sum_{l\in G_r} |b_l|^{p_{s_r}}\Big)^{p/p_{s_r}} \\
		&=\sum_{i=1}^n|a_i|^p\sum_{r\in B_i}\Big(\sum_{l\in G_r} |b_l|^{p_{s_r}}\Big)^{p/p_{s_r}} =\sum_{i=1}^n|a_i|^p\quad \text{ by (\ref{Eq:norm1}).}
	\end{align*}
	Next, by the fact that the function $f(x)=x^{p/p_{s_r}}$ is convex and the fact that $|A_j|\leq n$ for all $j$, we know that (\ref{Eq:triplesum}) is
	\begin{align*}
		&\leq \sum_{j=1}^m\sum_{r\in A_j}\frac{\big(|A_j||a_{i_r}|^p\sum_{l\in G_r} |b_l|^{p_{s_r}}\big)^{p/p_{s_r}}}{|A_j|} \\ &\leq n^{\frac{p}{p_{s_0}}-1} \, \sum_{j=1}^m\sum_{r\in A_j}|a_{i_r}|^p\Big(\sum_{l\in G_r} |b_l|^{p_{s_r}}\Big)^{p/p_{s_r}}  \\ &=n^{\frac{p}{p_{s_0}}-1} \,\sum_{i=1}^n\sum_{r\in B_i}|a_{i}|^p\Big(\sum_{l\in G_r} |b_l|^{p_{s_r}}\Big)^{p/p_{s_r}} 
		\\ &=n^{\frac{p}{p_{s_0}}-1} \,\sum_{i=1}^n|a_i|^p\sum_{r\in B_i}\Big(\sum_{l\in G_r} |b_l|^{p_{s_r}}\Big)^{p/p_{s_r}} \\ &=n^{\frac{p}{p_{s_0}}-1} \,\sum_{i=1}^n|a_i|^p\quad \text{ by (\ref{Eq:norm1})} 
	\end{align*}
	which establishes Claim 1.\hfill \qed
	
	\noindent\emph{Claim 2: If the sequences $(p_s)$ and $(n_s)$ are chosen so that 
	\[
	n_s> s^\frac{pp_s}{p-p_s}.
	\]
	for each $s$, then $(f_i)$ is not equivalent to the unit vector basis of any $\ell_p$ or $c_0$. Moreover, $X$ will not embed into any of the spaces $\ell_p$ or $c_0$.}
	
	We show the second, stronger statement. First, note that $X$ contains a subspace isomorphic to $\ell_p$ so clearly $X$ cannot be a subspace of any $\ell_q$, $q\not=p$, or $c_0$. We show that $X$ does not embed into $\ell_p$  by showing that $X$ does not have type $p$. If $X$ had type $p$, then there should exist a $C\geq1$ such that, when the $n_s$ basis vectors from $\ell_{p_s}^{n_s}$ are chosen, the inequalities
	\begin{align*}
	\underset{\e_j=\pm 1}{\operatorname{Ave}}\,\Big\|\sum_{j=1}^{n_s} \e_j e_{s,j}\Big\|&\leq C\left(\sum_{j=1}^{n_s}\|e_{s,j}\|^p\right)^{1/p} \\ &\quad \Longleftrightarrow \quad
	n_s^{1/{p_s}}\leq C n_s^{1/p}\quad \Longleftrightarrow \quad n_s\leq C^{\frac{pp_s}{p-p_s}}
	\end{align*}
	hold. However, this is a contradiction to our choice of $(p_s)$ and $(n_s)$ whenever $s$ is larger than $C$.\hfill \qed
\end{example}

\begin{remark}\label{Rmk:cotype}
The reader may observe that we just as easily could have used
\[
X=\left(\sum_{s=1}^\infty \ell_{q_s}^{n_s}\right)_q
\]
where $2\leq q<\infty$ and $(q_s)\sub(q,\infty]$ is a decreasing sequence of real numbers converging to $q$. Then our argument in Claim 2 would appeal to cotypes rather than types.
\end{remark}

If all of the block basic sequences of a basis $(x_i)$ are good, what then may be concluded in addition to Theorem \ref{Thrm:1asymplp}? The language of asymptotic structure is again useful.

\begin{definition}\label{Def:stabasymlp}
	For $1\leq p\leq \infty$, a space $X$ will be called \emph{stabilized asymptotic} $\ell_p$ provided that $X$ is asymptotic $\ell_p$ and $X$ is a stabilizing subspace for $X$ in the sense of Theorem \ref{Thrm:MilTom}. We analogously define stabilized $C$-asymptotic $\ell_p$.
\end{definition}

The previous definition is equivalent to saying that there exists a $C\geq1$ so that for every $n\in\N$ and $\e>0$ there is an $N=N(n,\e)$ such that for every block sequence $(y_i)_{i=1}^n$ of $n$ vectors with $\supp(y_1)\geq N$ we have $(y_i)\stackrel{C+\e}{\sim}\ell_p^n$.

\begin{theorem}\label{Thrm:stab1asymplp}
	Let $(x_i)$ be a basis for $X$ which has the property that every normalized block basic sequence of $(x_i)$ is good. Let $1\leq p\leq \infty$ be as in Proposition \ref{Prop:mustbelp3}. Then $X$ is stabilized $1$-asymptotic $\ell_p$ with respect to $(x_i)$.
\end{theorem}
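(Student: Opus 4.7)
The plan is to combine Theorem \ref{Thrm:1asymplp}, which already yields that $X$ is $1$-asymptotic $\ell_p$, with an interleaving construction in the spirit of Proposition \ref{Prop:mustbelp3}. First I would assume for contradiction that the stabilization property fails: there exist $n\in\N$ and $\e_0>0$ such that for every $N\in\N$ one can find a normalized block basic sequence $(z_i^N)_{i=1}^n$ of $(x_i)$ with $\min\supp(z_1^N)\geq N$ which is not $(1+\e_0)$-equivalent to the unit vector basis of $\ell_p^n$. For each such $N$ I would choose witnessing scalars $(a_i^N)_{i=1}^n$ with $\bigl(\sum_{i=1}^n |a_i^N|^p\bigr)^{1/p}=1$ such that either $\|\sum_i a_i^N z_i^N\|\geq 1+\e_0$ or $\|\sum_i a_i^N z_i^N\|\leq (1+\e_0)^{-1}$.

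The next step is to pass to a uniform witness by compactness. By the pigeonhole principle, the same direction of the above inequality occurs for infinitely many $N$, and by compactness of the unit sphere of $\ell_p^n$ one may pass to a further infinite set $\mathcal{N}\sub\N$ along which $(a_i^N)\to(a_i^*)$ with $\bigl(\sum_i|a_i^*|^p\bigr)^{1/p}=1$. Since each $z_i^N$ is normalized, the map $a\mapsto\|\sum_i a_i z_i^N\|$ is Lipschitz uniformly in $N$, so for all sufficiently large $N\in\mathcal{N}$ the fixed scalars $(a_i^*)$ already witness a $(1+\e_0/2)$-violation for $(z_i^N)$; say $\|\sum_i a_i^* z_i^N\|\geq 1+\e_0/2$, the other case being symmetric.

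Now, by Proposition \ref{Prop:mustbelp2} I would fix a good block basic sequence $(y_i)$ of $(x_i)$ whose spreading model is $1$-equivalent to the unit vector basis of $\ell_p$. Then I would construct a single normalized block basic sequence $(v_i)$ of $(x_i)$ by interleaving: for each $k=1,2,\ldots$, first append $n$ successive vectors from a sufficiently late tail of $(y_i)$, then append the block $(z_1^{N_k},\ldots,z_n^{N_k})$ for a suitably large $N_k\in\mathcal{N}$ chosen so that the successive support condition is preserved and so that the indices of the $y$'s used are large enough that the spreading model approximation for $(y_i)$ at the scalar tuple $(a_i^*)$ holds to within $\e_0/4$. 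By hypothesis $(v_i)$ is good, so $\lim_{k_1\to\infty}\|\sum_i a_i^* v_{k_i}\|$ must exist. However, infinitely often one may choose $(k_1,\ldots,k_n)$ so that $(v_{k_1},\ldots,v_{k_n})$ lies inside a single $y$-chunk, yielding norm in $[1-\e_0/4,\,1+\e_0/4]$; and infinitely often one may choose $(k_1,\ldots,k_n)$ so that $(v_{k_1},\ldots,v_{k_n})=(z_1^{N_k},\ldots,z_n^{N_k})$, yielding norm $\geq 1+\e_0/2$. This contradiction completes the argument.

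The hardest step will be the compactness extraction. Stabilizing the witnesses $(a_i^N)$ to a single $(a_i^*)$ is precisely what allows the interleaving to violate goodness at one \emph{fixed} scalar tuple; without it, the interleaved sequence might still satisfy goodness even though each $(z_i^{N_k})$ is bad, because different scalars could witness the badness for different $k$. As in Theorem \ref{Thrm:1asymplp}, the case $p=\infty$ is handled by replacing the $\ell_p$ norm on the scalar tuples with the $c_0$ norm throughout.
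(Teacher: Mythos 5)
Your argument is correct, but it takes a longer route than the paper's. The paper observes that once you assume stabilization fails, you can simply concatenate the witnessing bad $n$-blocks into a single normalized block basic sequence $(y_i)$ of $(x_i)$ in which each consecutive block $(y_{kn+1},\ldots,y_{(k+1)n})$ fails to be $(1+\e_0)$-equivalent to the unit vector basis of $\ell_p^n$. By hypothesis $(y_i)$ is good, and by Proposition \ref{Prop:mustbelp3} its spreading model is $1$-equivalent to the unit vector basis of $\ell_p$; but then all sufficiently far-out $n$-tuples of $y$'s would have to be $(1+\e_0)$-equivalent to $\ell_p^n$, contradicting the construction. This bypasses your compactness extraction of a uniform witnessing tuple $(a_i^*)$, the pigeonholing on the direction of the violation, and the interleaving with a separate good $\ell_p$-sequence from Proposition \ref{Prop:mustbelp2}. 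Your approach derives the contradiction at a single fixed scalar tuple and only invokes Proposition \ref{Prop:mustbelp2}, mirroring the block-tree argument of Theorem \ref{Thrm:1asymplp}; the paper instead exploits the full strength of Proposition \ref{Prop:mustbelp3} to contradict the equivalence constant directly, which renders the interleaving and compactness machinery unnecessary.
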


\begin{proof}
	Suppose that $X$ is not stabilized $1$-asymptotic $\ell_p$. Then there is an $\e>0$ and $n\in\N$ so that for all $N\in\N$ there exist normalized block vectors $(y_i)_{i=1}^n$ with $\supp(y_1)\geq N$ so that $(y_i)_{i=1}^n$ is not $(1+\e)$-equivalent to the unit vector basis of $\ell_p^n$. Hence, there is a normalized block basic sequence $(y_i)_{i=1}^\infty$ of $(x_i)$ so that for all $k$ the block vectors $(y_i)_{kn+1\leq i\leq k(n+1)}$ are not $(1+\e)$-equivalent to $\ell_p^n$. By assumption, $(y_i)$ is good, and by Proposition \ref{Prop:mustbelp3}, the spreading model generated by $(y_i)$ must be $1$-equivalent to the unit vector basis of $\ell_p$. Evidently, this is a contradiction and so $X$ must be stabilized $1$-asymptotic $\ell_p$.
\end{proof}

Analogously to the usual proof which shows that $1$-asymptotic $\ell_p$ spaces contain almost isometric copies of $\ell_p$, one may show that a space which is stabilized $1$-asymptotic $\ell_p$ is isomorphic to an $\ell_p$ sum of finite dimensional spaces in a very strong way. We omit the proof, but such a result gives the following corollary.

\begin{corollary}\label{Cor:lpsum}
	Let $(x_i)$ be a basis for $X$ which has the property that every normalized block basic sequence of $(x_i)$ is good. Let $1\leq p\leq \infty$ be as in Proposition \ref{Prop:mustbelp3}. Then $X$ is isomorphic to an $\ell_p$ sum of finite dimensional spaces (or $c_0$ in the case $p=\infty$). 
	
	Moreover, for any $\e>0$, the decomposition can be made so that, if $F_k=[x_i]_{n_k\leq i<n_{k+1}}$ for an increasing sequence $(n_k)_{k\geq0}$, then $[x_i]_{i\geq n_1}$ is isomorphic to $\left(\sum_{i=1}^\infty F_i\right)_p$ with constant $1+\e$. 
\end{corollary}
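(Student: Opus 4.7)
The plan is to leverage the stabilized $1$-asymptotic $\ell_p$ structure of $X$ (Theorem \ref{Thrm:stab1asymplp}) together with an iterative telescoping that only invokes the $2$-vector asymptotic game.

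Given $\e>0$, I would first fix positive reals $(\e_k)_{k\geq 1}$ with $\prod_{k\geq 1}(1+\e_k)\leq(1+\e)^{1/2}$. Let $N(\cdot,\cdot)$ denote the stabilization function of Theorem \ref{Thrm:MilTom}, which applies to $X$ itself by Definition \ref{Def:stabasymlp}. Inductively pick $n_0=0$ and $n_k>n_{k-1}$ with $n_k\geq N(2,\e_k)$ for every $k\geq 1$, and set $F_k=[x_i]_{n_k\leq i<n_{k+1}}$.

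The main step is an induction on $M$ showing that for every finite sum $u=\sum_{j=1}^{M}u_j$ with $0\neq u_j\in F_{k_j}$ and $1\leq k_1<\cdots<k_M$,
\[
\prod_{j=1}^{M-1}(1+\e_{k_j})^{-1}\Bigl(\sum_{j=1}^{M}\|u_j\|^p\Bigr)^{1/p}\leq\|u\|\leq\prod_{j=1}^{M-1}(1+\e_{k_j})\Bigl(\sum_{j=1}^{M}\|u_j\|^p\Bigr)^{1/p}.
\]
For $M\geq 2$, write $u=u_1+v$ with $v=\sum_{j\geq 2}u_j$. The normalized pair $u_1/\|u_1\|,\;v/\|v\|$ is a block basic sequence of $(x_i)$ whose first element has support starting at $\geq n_{k_1}\geq N(2,\e_{k_1})$, since $\supp(u_1)\subseteq[n_{k_1},n_{k_1+1})$ while $\supp(v)\subseteq[n_{k_1+1},\infty)$. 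By Theorem \ref{Thrm:MilTom} combined with Theorem \ref{Thrm:1asymplp} (which forces every member of $\{X\}_2$ to be $1$-equivalent to the unit vector basis of $\ell_p^2$), this pair is $(1+\e_{k_1})$-equivalent to $\ell_p^2$, yielding $\|u\|\leq(1+\e_{k_1})(\|u_1\|^p+\|v\|^p)^{1/p}$. Applying the inductive hypothesis to $v$ and absorbing the lone $\|u_1\|^p$ term using that every factor $(1+\e_{k_j})\geq 1$ completes the upper bound; the lower bound is symmetric. Because every product appearing here is bounded by $(1+\e)^{1/2}$, the canonical linear map $(u_i)\mapsto\sum u_i$ from $\bigl(\sum_{i\geq 1}F_i\bigr)_p$ onto $[x_i]_{i\geq n_1}$ extends to a $(1+\e)$-isomorphism. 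The case $p=\infty$ proceeds identically with $c_0$ sums and $\max$ in place of the $\ell_p$ power-sum.

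The principal obstacle I anticipate is that a direct $M$-vector estimate would require $n_k\geq N(M,\e_k)$ simultaneously for every $M$, which cannot be arranged. The two-vector telescoping elegantly circumvents this by reducing each $M$-fold bound to $M-1$ binary applications of the $2$-dimensional stabilization.
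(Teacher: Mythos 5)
Your proof is correct and carries out exactly the argument the paper gestures at but omits ("the usual proof which shows that $1$-asymptotic $\ell_p$ spaces contain almost isometric copies of $\ell_p$"); the binary telescoping on the pair $u_1/\|u_1\|$, $v/\|v\|$ is the standard device for obtaining a uniform constant across all lengths $M$, and you correctly identify why a direct $M$-vector estimate would fail. One cosmetic remark: rather than invoking Theorem~\ref{Thrm:MilTom} and Theorem~\ref{Thrm:1asymplp} separately, it is cleaner to cite Theorem~\ref{Thrm:stab1asymplp} together with the reformulation following Definition~\ref{Def:stabasymlp}, which yields directly the function $N(n,\e)$ with the property that any normalized block pair starting past $N(2,\e)$ is $(1+\e)$-equivalent to $\ell_p^2$; also, since the stabilization requires $\supp(y_1)>N$, you should take $n_k>N(2,\e_k)$ rather than $n_k\geq N(2,\e_k)$.
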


This corollary shows that any space satisfying both Claims 1 and 2 of Example \ref{Ex:allgood} must be essentially the same as the example which was given there.

As was the case in the previous section, the converse of Theorem \ref{Thrm:stab1asymplp} holds, and the proof is very similar.

\begin{theorem}\label{Thrm:converse2}
	Assume that $X$ with basis $(x_i)$ is stabilized $1$-asymptotic $\ell_p$ for some $1\leq p\leq\infty$. Then every normalized block basic sequence of $(x_i)$ is good.
\end{theorem}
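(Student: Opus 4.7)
The plan is to mimic the proof of Theorem \ref{Thrm:converse1}, but in a simpler form: rather than having to carefully build a good branch through an arbitrary block tree, we observe that the stabilization hypothesis forces \emph{every} tail of a normalized block basic sequence to be close to $\ell_p^n$, which is enough to make the required limits exist.

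Fix a normalized block basic sequence $(y_i)$ of $(x_i)$, a finite sequence of scalars $(a_i)_{i=1}^n$, and $\e>0$. The goal is to show that $\lim_{k_1\to\infty}\|\sum_{i=1}^n a_i y_{k_i}\|$ exists. Set $\d=\e/(\sum_{i=1}^n|a_i|^p)^{1/p}$ (with the usual modification in the case $p=\infty$, replacing the $\ell_p$ norm by the $\ell_\infty$ norm of $(a_i)$). Applying the stabilization hypothesis with parameters $n$ and $\d$, we obtain an integer $N=N(n,\d)$ with the property that any normalized block basic sequence of $(x_i)$ of length $n$ whose first vector has support at least $N$ is $(1+\d)$-equivalent to the unit vector basis of $\ell_p^n$.

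Now let $K$ be large enough that $\supp(y_K)\geq N$. For any increasing indices $K\leq k_1<\cdots<k_n$, the block vectors $(y_{k_i})_{i=1}^n$ are themselves a normalized block basic sequence of $(x_i)$ with first support beyond $N$, so they are $(1+\d)$-equivalent to the unit vector basis of $\ell_p^n$. This yields
\[
\frac{1}{1+\d}\Big(\sum_{i=1}^n|a_i|^p\Big)^{1/p}\leq\Big\|\sum_{i=1}^n a_i y_{k_i}\Big\|\leq(1+\d)\Big(\sum_{i=1}^n|a_i|^p\Big)^{1/p},
\]
so that the norm differs from $(\sum_{i=1}^n|a_i|^p)^{1/p}$ by at most a multiple of $\d$, and hence by a quantity controlled by $\e$.

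Since $\e>0$ was arbitrary, letting $\d\to 0$ (and thus $K\to\infty$) shows that $\lim_{k_1\to\infty}\|\sum_{i=1}^n a_i y_{k_i}\|$ exists and equals $(\sum_{i=1}^n|a_i|^p)^{1/p}$ (respectively $\max_i|a_i|$ in the $p=\infty$ case). Since the scalars $(a_i)$ were arbitrary, $(y_i)$ is good. There is no real obstacle to this argument; the only bookkeeping is handling the endpoint case $p=\infty$ in parallel, and ensuring that the constant $\d$ is chosen so that the multiplicative $(1+\d)$-error translates into the additive $\e$-error that appears in the definition of ``good.''
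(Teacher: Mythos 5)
Your proof is correct and is essentially what the paper intends: the paper omits the proof of this theorem, merely remarking that it is ``very similar'' to the proof of Theorem \ref{Thrm:converse1}, and you have written out what that omitted proof should be. You also correctly identify the right simplification: whereas the converse1 argument must carefully build a branch through an arbitrary block tree using winning strategies from Theorem \ref{Thrm:subspaceplayer} (since mere $1$-asymptotic $\ell_p$ gives no control over arbitrary far-out blocks), the stabilization hypothesis here directly forces every length-$n$ normalized block subsequence with first support past $N(n,\d)$ to be $(1+\d)$-equivalent to $\ell_p^n$, so the tail of any fixed normalized block basic sequence automatically satisfies the bound needed for $\lim_{k_1\to\infty}\|\sum_{i=1}^n a_i y_{k_i}\|$ to exist; no tree construction or strategy-stealing is needed. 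The $\d$-to-$\e$ bookkeeping is correct (with $\d=\e/(\sum|a_i|^p)^{1/p}$ one has $(1+\d)(\sum|a_i|^p)^{1/p}=(\sum|a_i|^p)^{1/p}+\e$ and $(1+\d)^{-1}(\sum|a_i|^p)^{1/p}\geq(\sum|a_i|^p)^{1/p}-\e$), and the $p=\infty$ endpoint is handled in parallel as you note.
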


\section{Good NCCB Basic Sequences}\label{Sec:NCCB}

We now use the Milliken-Taylor theorem as a means to produce basic sequences so that all of their NCCB basic sequences are good. The next lemma is actually a special case of Theorem 3.3 in a paper of Halbeisen and Odell \cite{HalOde}. In that paper, the authors investigate asymptotic models, another description of asymptotic geometry which generalizes spreading models. We reproduce their proof here to show how it uses the Milliken-Taylor theorem the same way the proof of Theorem \ref{Thrm:spreadingmodel} uses Ramsey's theorem.

\begin{lemma}\label{Lemma:constantcoeff}
	Let $(x_i)$ be a normalized basic sequence in a Banach space $X$. There exists $P\in\langle\N\rangle^\omega$ such that for any $Q=(Q_i)\in\langle P\rangle^\omega$ the NCCB basic sequence
	\[
	y_i=\frac{\sum_{k\in Q_i} x_k}{\|\sum_{k\in Q_i} x_k\|}.
	\]
	corresponding to $(Q_i)$ is a good sequence. Moreover, all such sequences generate spreading models which are $1$-equivalent to each other.
\end{lemma}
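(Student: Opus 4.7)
The plan is to adapt the Brunel--Sucheston diagonalization argument from Theorem \ref{Thrm:spreadingmodel}, replacing Ramsey's theorem with the Milliken--Taylor theorem (Theorem \ref{Thrm:miltay}). For each finite tuple $\mathbf{a}=(a_i)_{i=1}^n$ of rationals in $[-1,1]$ and each $\e>0$, consider the function $f_{\mathbf{a}}:\langle\N\rangle^n\to[0,n]$ defined by
\[
f_{\mathbf{a}}(Q_1,\ldots,Q_n)=\Big\|\sum_{i=1}^n a_i\,\frac{\sum_{k\in Q_i}x_k}{\|\sum_{k\in Q_i}x_k\|}\Big\|.
\]
Partition the interval $[0,n]$ into $\lceil n/\e\rceil$ pieces of length $\e$ to turn $f_{\mathbf{a}}$ into a finitely-valued coloring; Milliken--Taylor then supplies a coarsening on which the color, and hence $f_{\mathbf{a}}$ up to $\e$, is constant.

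Enumerate the countable set $R$ of all rational finite tuples from $[-1,1]$ as $\{\mathbf{a}^{(m)}\}_{m=1}^\infty$ and fix $\e_m\downarrow 0$. Starting from $P^0=\N$, iteratively apply the above to produce $P^m\in\langle P^{m-1}\rangle^\omega$ such that $f_{\mathbf{a}^{(m)}}$ is $\e_m$-constant on $\langle P^m\rangle^{n_m}$. Form the diagonal blocking $P=(P_m)_{m=1}^\infty$ by choosing $P_m$ to be an element of $P^m$ late enough to satisfy $P_m>P_{m-1}$ in $\N$. Since $P^k\ssub P^m$ whenever $k\geq m$, each such $P_k$ is itself a finite union of elements of $P^m$, so the tail $(P_k)_{k\geq m}$ lies in $\langle P^m\rangle^\omega$, and consequently $P$ inherits the stabilization property at every scale $(\mathbf{a}^{(m)},\e_m)$.

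Now fix any $Q\in\langle P\rangle^\omega$ and let $(y_i)$ be the corresponding NCCB basic sequence. For each $m$, any indices $k_1<\cdots<k_{n_m}$ with $k_1$ large enough that $Q_{k_1}$ is supported on $\bigcup_{j\geq m}P_j$ produce a tuple $(Q_{k_1},\ldots,Q_{k_{n_m}})\in\langle P^m\rangle^{n_m}$. Hence $\|\sum_{i=1}^{n_m}a_i^{(m)}y_{k_i}\|$ is within $\e_m$ of a constant $c(\mathbf{a}^{(m)})$ depending only on $P$ (not on the particular $Q$). Since each $y_i$ is normalized, the norm $\|\sum a_i y_{k_i}\|$ depends on $(a_i)$ in a Lipschitz way, so the existence of the limit $\lim_{k_1\to\infty}\|\sum a_iy_{k_i}\|$ on rational tuples extends to all real tuples, proving $(y_i)$ is good. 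The moreover statement follows at once: because $c(\mathbf{a}^{(m)})$ is determined by $P$ alone, the spreading models generated by any two NCCB sequences arising from blockings in $\langle P\rangle^\omega$ assign the same norm to every rational coefficient tuple, hence by continuity are $1$-equivalent.

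The main obstacle I expect is the bookkeeping in the diagonalization step: one must select each $P_m$ from $P^m$ late enough to make the $(P_m)$ a valid blocking of $\N$, while preserving the property that every tail of $P$ remains inside each $\langle P^m\rangle^\omega$. This is routine given the nesting $\langle P^0\rangle\supseteq\langle P^1\rangle\supseteq\cdots$ of coarsenings, and is exactly analogous to the diagonal choice in the Brunel--Sucheston proof, but with \emph{coarsening} playing the role of \emph{subsequence}.
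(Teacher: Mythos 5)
Your proposal uses the same Milliken--Taylor diagonalization strategy as the paper, but one bookkeeping step contains a genuine gap. At stage $m$ you stabilize $f_{\mathbf{a}^{(m)}}$ to within $\e_m$, and that is the \emph{only} stage at which the tuple $\mathbf{a}^{(m)}$ is ever treated; all later stages refine for other tuples. After passing to the diagonal $P$, you therefore know only that, along any $Q\in\langle P\rangle^\omega$, the oscillation of $k_1\mapsto\bigl\|\sum_i a^{(m)}_i y_{k_i}\bigr\|$ is eventually bounded by the fixed number $\e_m$; nothing forces it to shrink as $k_1\to\infty$. But ``good'' requires the limit $\lim_{k_1\to\infty}\bigl\|\sum_i a^{(m)}_i y_{k_i}\bigr\|$ to exist, and oscillation bounded by a fixed $\e_m>0$ does not give convergence.

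The paper avoids this with a double diagonalization: for a single fixed tuple $(a_i)$ it produces a nested sequence of coarsenings $Q(1),Q(2),\ldots$ with $Q(m+1)\ssub Q(m)$ achieving $\e_1,\e_2,\ldots$-constancy for that one tuple, diagonalizes to get the limit for that tuple, and only then enumerates $R$ and diagonalizes a second time over the tuples. Your one-pass scheme can be repaired by enumerating the pairs $(\mathbf{a},m)$ instead of the tuples --- equivalently, listing the rational tuples so that each recurs infinitely often, paired with $\e_m\to 0$. Then a fixed tuple is revisited at infinitely many stages with ever-smaller tolerances, so the oscillation along tails of $P$ does tend to $0$. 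With that amendment your argument matches the paper's, including the ``moreover'' clause, which follows (as you correctly observe) from the fact that the stabilized value $c(\mathbf{a})$ depends only on $P$ and not on the particular coarsening $Q\in\langle P\rangle^\omega$.
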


\begin{proof}
	 Fix some sequence $(\e_m)$ of numbers such that $0<\e_m<1$ and $\e_m\to 0$. Let $(a_i)_{i=1}^n\in[-1,1]^n$ be scalars. Define the map $f:\langle\N\rangle^n\to[0,n]$ by
	\[
	f(E_1,\ldots,E_n)=\left\|\sum_{i=1}^n a_i \frac{\sum_{k\in E_i} x_k}{\|\sum_{k\in E_i} x_k\|}\right\|.
	\]  
	Since $[0,n]$ is compact, we apply Theorem \ref{Thrm:miltay} to find a $Q(1)\in\langle\N\rangle^\omega$ such that  
	\[
	\left|\,\Big\|\sum_{i=1}^n a_i \frac{\sum_{k\in E^1_i} x_k}{\|\sum_{k\in E^1_i} x_k\|}\Big\|-\Big\|\sum_{i=1}^n a_i \frac{\sum_{k\in F^1_i} x_k}{\|\sum_{k\in F^1_i} x_k\|}\Big\|\,\right|\leq\e_1
	\]
	whenever $(E^1_i),(F^1_i)\in\langle Q(1)\rangle^n$. In particular, this means that whenever  $(y_i^1)$ is a NCCB basic sequence corresponding to a blocking of naturals $(E^1_i)\in\langle Q(1)\rangle^\omega$ then
	\[
	\left|\,\Big\|\sum_{i=1}^n a_i y_{j_i}^1\Big\|-\Big\|\sum_{i=1}^n a_i y_{k_i}^1\Big\|\,\right| \leq\e_1
	\]
	for any $j_1<\ldots<j_n$ and $k_1<\ldots<k_n$. Repeating this process for each $m$, we can find a sequence of blockings $(Q(m))\sub\langle\N\rangle^\omega$ such that $Q(m+1)\ssub Q(m)$  and if $(y_i^m)$ is a NCCB basic sequence corresponding to a blocking of naturals $(E^m_i)\in\langle Q(m)\rangle^\omega$ then
	\[
	\left|\,\Big\|\sum_{i=1}^n a_i y_{j_i}^m\Big\|-\Big\|\sum_{i=1}^n a_i y_{k_i}^m\Big\|\,\right| \leq\e_m
	\]
	for any $j_1<\ldots<j_n$ and $k_1<\ldots<k_n$. If $Q(m)=(Q^m_i)$, then let $Q=(Q^i_i)$ be the diagonal blocking. Then for any blocking of naturals $(E_i)\in\langle Q\rangle^\omega$ and NCCB basic sequence $(y_i)$ corresponding to $(E_i)$ the limit $\lim_{k_1\to\infty} \|\sum_{i=1}^n a_i y_{k_i}\|$ exists. Moreover, by construction, this limit is the same for any $(E_i)$ so chosen. 
	
	Enumerate the countable set
	\[
	R=\{(a_i)_{i=1}^n: n\in\N, \, a_i\in\Q\cap[-1,1] \text{ for } 1\leq i\leq n\}.
	\]
	By the technique of the previous paragraph, we can find blockings of naturals $P(m)\in\langle\N\rangle^\omega$ for all $m\in\N$ such that $P(m+1)\ssub P(m)$ and such that for all $m\in\N$ the limit $\lim_{k_1\to\infty} \|\sum_{i=1}^n a_i y_{k_i}\|$ exists whenever $(a_i)$ is one of the first $m$ elements in the enumeration of $R$ and whenever $(y_i)$ is a NCCB basic sequence corresponding to $(E_i)\in\langle P(m)\rangle^\omega$. Moreover, the value of this limit depends on $(a_i)$ but not on $(E_i)$.  Let $P$ be the diagonal blocking of these $P(m)$ and then the limit $\lim_{k_1\to\infty} \|\sum_{i=1}^n a_i y_{k_i}\|$ exists whenever any $(a_i)$ in $R$ is used and whenever $(y_i)$ is a NCCB basic sequence corresponding to $(E_i)\in\langle P\rangle^\omega$. This is sufficient to show that all NCCB basic sequences $(y_i)$ corresponding to $(E_i)\in\langle P\rangle^\omega$ are good sequences. Since the relevant limits depend on $(a_i)$ but not on $(E_i)$ the spreading models of all these sequences are $1$-equivalent to each another.
\end{proof}

We can now present a new stabilization for the spreading models of NCCB basic sequences.

\begin{theorem}\label{Thrm:newstab}
	Let $(x_i)$ be a normalized basic sequence in a Banach space $X$. There exists a (not necessarily normalized) block basic sequence $(y_i)$ of $(x_i)$ such that all NCCB basic sequences of $(y_i)$ are good sequences and all such sequences have spreading models which are $1$-equivalent.
\end{theorem}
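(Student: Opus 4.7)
The plan is a direct reduction to Lemma~\ref{Lemma:constantcoeff}. First, I would apply that lemma to the given normalized basic sequence $(x_i)$ to produce a blocking $P = (P_i) \in \langle\N\rangle^\omega$ with the property that every NCCB basic sequence of $(x_i)$ corresponding to a blocking in $\langle P\rangle^\omega$ is good, and that all such sequences have pairwise $1$-equivalent spreading models. Next, I would define the desired block basic sequence by
\[
y_i = \sum_{k \in P_i} x_k,
\]
\emph{without normalizing}. This is a block basic sequence of $(x_i)$, hence basic, and it is in general only guaranteed to be basic, consistent with the parenthetical in the statement of the theorem.

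The core of the argument is a bookkeeping identity. Suppose $(z_i)$ is any NCCB basic sequence of $(y_i)$ corresponding to a blocking $(E_i)$ of the indices of $(y_j)$. Then
\[
\sum_{j \in E_i} y_j \;=\; \sum_{j \in E_i}\, \sum_{k \in P_j} x_k \;=\; \sum_{k \in Q_i} x_k,
\qquad \text{where } Q_i = \bigcup_{j \in E_i} P_j.
\]
Since $(E_i)$ is a blocking of $\N$ and each $Q_i$ is a union of $P_j$'s from a block strictly preceding those used in $Q_{i+1}$, the sequence $Q = (Q_i)$ is a well-defined element of $\langle P\rangle^\omega$. Dividing by the norm we obtain $z_i = \sum_{k \in Q_i} x_k / \|\sum_{k \in Q_i} x_k\|$, which is exactly the NCCB basic sequence of $(x_i)$ corresponding to $Q$. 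Lemma~\ref{Lemma:constantcoeff} then immediately implies that $(z_i)$ is good; moreover, because the value of each limit $\lim_{k_1\to\infty}\|\sum a_i z_{k_i}\|$ provided by that lemma depends only on the scalars $(a_i)$ and not on the particular $Q \in \langle P\rangle^\omega$, the spreading models of all such $(z_i)$ are pairwise $1$-equivalent.

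There is no real obstacle to overcome: the Milliken-Taylor stabilization has already been carried out inside Lemma~\ref{Lemma:constantcoeff}, and Theorem~\ref{Thrm:newstab} is essentially a repackaging of that lemma. The only subtle point worth flagging in the write-up is why $(y_i)$ cannot in general be replaced by its normalization. Replacing $y_i$ by $y_i/\|y_i\|$ would attach a coefficient $1/\|y_j\|$ to each $x_k$ with $k \in P_j$, and since these normalizing constants typically vary with $j$, an NCCB basic sequence of the normalized $(y_i/\|y_i\|)$ would no longer have constant coefficients when expanded in $(x_i)$; thus Lemma~\ref{Lemma:constantcoeff} could not be applied directly. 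Leaving $(y_i)$ unnormalized preserves the constant-coefficient structure and makes the reduction transparent, which accounts for the hypothesis ``not necessarily semi-normalized'' in the statement.
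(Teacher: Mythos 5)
Your proof is correct and follows the same route as the paper: apply Lemma~\ref{Lemma:constantcoeff} to obtain $P$, set $y_i=\sum_{k\in P_i}x_k$, and observe that any NCCB basic sequence of $(y_i)$ is literally an NCCB basic sequence of $(x_i)$ corresponding to some blocking in $\langle P\rangle^\omega$. Your added remark explaining why $(y_i)$ must be left unnormalized is a useful clarification but does not change the argument.
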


\begin{proof}
	Let $P=(P_i)\in\langle\N\rangle^\omega$ be the blocking of naturals given by Lemma \ref{Lemma:constantcoeff}. Define $y_i=\sum_{k\in P_i} x_k$. Then for every blocking of naturals $(E_i)\in\langle\N\rangle^\omega$ the corresponding NCCB basic sequence $(z_i)$ of $(y_i)$ satisfies
	\[
	z_i=\frac{\sum_{k\in E_i} y_k}{\|\sum_{k\in E_i} y_k\|} =\frac{\sum_{k\in E_i} \sum_{l\in P_k} x_l}{\|\sum_{k\in E_i} \sum_{l\in P_k} x_l\|}=\frac{\sum_{l\in F_i} x_l}{\|\sum_{l\in F_i} x_l\|}
	\] 
	where $(F_i)\in\langle P\rangle^\omega$. Hence, by Lemma \ref{Lemma:constantcoeff}, all such $(z_i)$ are good and have the same spreading model.
\end{proof}

\begin{remark}\label{Rmk:allequiv}
	It is no coincidence that when we were able to stabilize all of the NCCB basic sequences of $(x_i)$ we also found that all of the spreading models generated by these sequences were $1$-equivalent to one another. Actually, this a necessary consequence of the fact that all NCCB basic sequences are good. To see this, suppose that all NCCB basic sequences of $(x_i)$ are good but that there are two such sequences, say $(y^1_i)$ and $(y^2_i)$ which generate spreading models which are not $1$-equivalent. Then there exists some NCCB basic sequence $(z_i)$ of $(x_i)$ such that $(z_{2k+1})$ is a subsequence of $(y^1_i)$ and $(z_{2k})$ is a subsequence of $(y^2_i)$. Then clearly $(z_k)$ cannot be good.
\end{remark}

The sequence $(y_i)$ obtained in Theorem \ref{Thrm:newstab} is not necessarily normalized nor is it likely to be even semi-normalized. However, if it is semi-normalized and also unconditional, then the following shows that the spreading model must be equivalent to $c_0$ or $\ell_p$. The proof will appeal to Zippin's theorem from \cite{Zip}. Zippin's theorem states that, if $(x_i)$ is a normalized unconditional basis which is equivalent to all of its NCCB basic sequences, then $(x_i)$ is equivalent to $c_0$ or to $\ell_p$ for some $1\leq p<\infty$.

\begin{theorem}\label{Thrm:asymptoticzippin}
	Let $(x_i)$ be an unconditional semi-normalized basic sequence in a Banach space such that every NCCB basic sequence of $(x_i)$ is good. Then all the spreading models generated by these NCCB basic sequences must be uniformly equivalent to the unit vector basis of $c_0$ or $\ell_p$ for some $1\leq p<\infty$.
\end{theorem}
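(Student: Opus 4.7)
The plan is to verify the hypotheses of Zippin's theorem for the common spreading model $(e_i)$ generated by the NCCBs of $(x_i)$. By Remark \ref{Rmk:allequiv} this spreading model is unique up to $1$-equivalence; it is normalized and $1$-spreading, and it inherits the unconditional constant $C_u$ of $(x_i)$. Thus the only missing hypothesis is that $(e_i)$ be equivalent to all of its own NCCB basic sequences, and I will in fact establish $1$-equivalence.

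Fix an NCCB $(v_j)$ of $(e_i)$ corresponding to a blocking $(E_j)$ with $m_j=|E_j|$, so $v_j=\phi(m_j)^{-1}\sum_{k\in E_j}e_k$ where $\phi(m):=\|\sum_{i=1}^m e_i\|$. Setting $u_i:=x_i/\|x_i\|$, the sequence $(u_i)$ is an NCCB of $(x_i)$ with singleton blocks, so by hypothesis it is good with spreading model $(e_i)$ and
\[
\Bigl\|\sum_{j=1}^n b_j v_j\Bigr\| \;=\; \lim_{N\to\infty}\Bigl\|\sum_l a_l\, u_{K_l^{(N)}}\Bigr\|,
\]
where $a_l=b_j\phi(m_j)^{-1}$ for $l\in E_j$ and $(K_l^{(N)})$ is any sequence of indices with $K_1^{(N)}\to\infty$. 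Using compactness of $\|x_k\|\in[A,B]$, I would first pass to an infinite $I\sub\N$ along which $\|x_k\|\to\alpha$ for some $\alpha>0$, and choose all the $K_l^{(N)}$ from $I$. Let $\hat y_j^{(N)}$ be the normalized sum of the $x_{K_l^{(N)}}$ over the $j$-th block ($l\in E_j$); arranged consecutively and extended by cycling block sizes through $m_1,\ldots,m_n$, they form an NCCB of $(x_i)$ whose spreading model, by the hypothesis together with Remark \ref{Rmk:allequiv}, is $(e_i)$. Hence $\|\sum_j b_j\hat y_j^{(N)}\|\to\|\sum_j b_j e_j\|$ as $N\to\infty$. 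A direct triangle-inequality bookkeeping, using $\|x_{K_l^{(N)}}\|\to\alpha$ and the fact that $\phi(m_j)^{-1}\|\sum_{l\in E_j}u_{K_l^{(N)}}\|\to 1$, then shows that $\|\sum_l a_l u_{K_l^{(N)}}\|$ and $\|\sum_j b_j\hat y_j^{(N)}\|$ share the same $N\to\infty$ limit. Combining gives $\|\sum_j b_j v_j\|=\|\sum_j b_j e_j\|$, so $(v_j)$ is $1$-equivalent to $(e_i)$.

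With this in hand, Zippin's theorem produces $p\in[1,\infty)$ (or the $c_0$ case) such that $(e_i)$ is equivalent to the corresponding unit vector basis, with constant depending only on $C_u$; Remark \ref{Rmk:allequiv} then propagates the same bound to every spreading model of every NCCB of $(x_i)$, giving the claimed uniform equivalence. The main obstacle I anticipate is the calibration in the second paragraph: since $(x_i)$ is only semi-normalized, an NCCB of $(x_i)$ is not literally an NCCB of $(u_i)$, and passing to the subsequence $I$ on which $\|x_k\|\to\alpha$ is what forces the two candidate limits to agree.
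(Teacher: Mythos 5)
Your proposal is correct, and it arrives at the same final structure as the paper (apply Zippin's theorem to the common spreading model $(e_i)$, after showing $(e_i)$ is equivalent to its own NCCB basic sequences), but the key estimate is handled differently. The paper works with the normalization $(z_i)$ of $(x_i)$ and tracks the unconditionality constant $U$ and the bounds $\delta\leq\|x_i\|\leq M$ through the computation, concluding that each NCCB $(f_i)$ of $(e_i)$ is only $\tfrac{8U^2M}{\delta}$-equivalent to $(e_i)$. You instead extract an infinite $I\subseteq\N$ along which $\|x_k\|\to\alpha>0$ and run all the spreading-model limits through $I$; the coefficient mismatch between $\sum_l a_l u_{K_l}$ (built from the singleton NCCB $u_i=x_i/\|x_i\|$) and $\sum_j b_j\hat y_j$ (built from an $m_j$-block NCCB of $(x_i)$) then vanishes as $N\to\infty$, giving the sharper conclusion that $(e_i)$ is $1$-equivalent to all of its NCCBs. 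That is a genuine strengthening: unconditionality plays no role in your equivalence step and enters only at the very end to verify the hypothesis of Zippin's theorem, whereas in the paper it is used inside the estimates. The final uniformity statement is the same either way, since Zippin's constant still depends on the unconditionality constant of $(e_i)$, which in both arguments is controlled by that of $(x_i)$. When writing this up you should make explicit the two points you already anticipate: (i) the NCCB of $(x_i)$ supported on $I$ with block sizes cycling through $m_1,\dots,m_n$ is still an NCCB of $(x_i)$, hence good with spreading model $1$-equivalent to $(e_i)$ by Remark~\ref{Rmk:allequiv}, which justifies $\|\sum_j b_j\hat y_j^{(N)}\|\to\|\sum_j b_j e_j\|$; and (ii) the convergence $\|\sum_{l\in E_j}x_{K_l^{(N)}}\|\to\alpha\,\phi(m_j)$ uses both $\|x_{K_l^{(N)}}\|\to\alpha$ and the spreading-model limit $\|\sum_{l\in E_j}u_{K_l^{(N)}}\|\to\phi(m_j)$, so the two approximations should be written out together in the triangle-inequality bookkeeping.
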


\begin{proof}
	Let $(x_i)$ be as stated and $(z_i)$ be its normalization. Let $U$ be the unconditionality constant of $(x_i)$ and $\d$ and $M$ be such that $0<\d\leq\|x_i\|\leq M<\infty$. Assume that $(e_i)$ is the spreading model of $(z_i)$. Let $(F_i)\in\langle \N\rangle^\omega$ and $(f_i)$ be the NCCB basic sequence of $(e_i)$ corresponding to $(F_i)$. We will show that $(f_i)$ is equivalent to $(e_i)$. 
	
	Fix $n\in\N$ and scalars $(a_i)_{i=1}^n$. Define a NCCB basic sequence $(y_i)$ of $(x_i)$ corresponding to any $(E_i)\in\langle\N\rangle^\omega$ satisfying $|E_{kn+i}|=|F_i|$ where $k$ is a nonnegative integer and $1\leq i\leq n$. In other words, $(E_i)$ is some blocking of naturals where the cardinalities of the blocks repeat the cardinalities of the first $n$ elements of $(F_i)$. Then by assumption $(y_i)$ is good and generates a spreading model $(s_i)$ which is $1$-equivalent to $(e_i)$ by Remark \ref{Rmk:allequiv}.
	
	Since $(e_i)$ is the spreading model of $(z_i)$, it is possible to choose $k$ large enough that
	\begin{align*}
	\frac{1}{2}\Bigg\|\sum_{i=1}^n \sum_{l\in F_i} \frac{a_i}{\|\sum_{l\in F_i} e_l \|} e_l\Bigg\|&\leq\Bigg\|\sum_{i=1}^n \sum_{l\in E_{kn+i}} \frac{a_i}{\|\sum_{l\in F_i} e_l \|} z_l\Bigg\| \\
	&\leq 2\Bigg\|\sum_{i=1}^n \sum_{l\in F_i} \frac{a_i}{\|\sum_{l\in F_i} e_l \|} e_l\Bigg\|
	\end{align*}
	and 
	\[
	\frac{1}{2}\Big\|\sum_{l\in F_i} e_l\Big\|\leq \Big\|\sum_{l\in E_{kn+i}} z_l\Big\|\leq 2\Big\|\sum_{l\in F_i} e_l\Big\|
	\]
	for all $1\leq i\leq n$. Also, since $(s_i)$ is the spreading model of $(y_i)$ it is possible to choose $k$ perhaps larger so that
	\[
	\frac{1}{2}\Big\| \sum_{i=1}^n a_i s_i\Big\|\leq \Big\|\sum_{i=1}^n a_i y_{kn+i}\Big\|\leq 2\Big\| \sum_{i=1}^n a_i s_i\Big\|.
	\]
	Under these assumptions, we have 
	\[
	\Big\|\sum_{l\in F_i} e_l \Big\|\leq 2\Big\|\sum_{l\in E_{kn+i}}z_l\Big\|= 2\Big\|\sum_{l\in E_{kn+i}}\frac{x_l}{\|x_l\|}\Big\|\leq \frac{2U}{\d}\Big\|\sum_{l\in E_{kn+i}} x_l\Big\|
	\]
	for $1\leq i\leq n$ and so 
	\[
	\frac{|a_i|}{\|\sum_{l\in F_i} e_l\|\, \|x_l\|}\geq \frac{\d|a_i|}{2UM\|\sum_{l\in E_{kn+i}} x_l\|}.
	\]
	Similarly,
	\[
	\Big\|\sum_{l\in F_i} e_l \Big\|\geq \frac{1}{2UM}\Big\|\sum_{l\in E_{kn+i}} x_l\Big\|\quad\text{ and }\quad \frac{|a_i|}{\|\sum_{l\in F_i} e_l\|\,\|x_l\|}\leq \frac{2UM|a_i|}{\d\|\sum_{l\in E_{kn+i}} x_l\|}.
	\]
	Combining all this together, we obtain
	\begin{align*}
		\Big\|&\sum_{i=1}^n a_i f_i\Big\|=\Bigg\|\sum_{i=1}^n \sum_{l\in F_i} \frac{a_i}{\|\sum_{l\in F_i} e_l \|} e_l\Bigg\|\leq 2\Bigg\|\sum_{i=1}^n \sum_{l\in E_{kn+i}} \frac{a_i}{\|\sum_{l\in F_i} e_l \|} z_l\Bigg\| \\
		&=2\Bigg\|\sum_{i=1}^n \sum_{l\in E_{kn+i}} \frac{a_i}{\|\sum_{l\in F_i} e_l \|\, \|x_l\|} x_l\Bigg\|\leq \frac{4U^2M}{\d} \Big\|\sum_{i=1}^n a_i \frac{\sum_{l\in E_{kn+i}} x_l}{\|\sum_{l\in E_{kn+i}} x_l\|}\Big\| \\
		&=\frac{4U^2M}{\d} \Big\|\sum_{i=1}^n a_i y_{kn+i}\Big\|\leq \frac{8U^2M}{\d} \Big\|\sum_{i=1}^n a_i s_i\Big\|=\frac{8U^2M}{\d}\Big\|\sum_{i=1}^n a_i e_i\Big\|,
	\end{align*}
	and similarly
	\[
	\Big\|\sum_{i=1}^n a_i f_i\Big\|\geq\frac{\d}{8U^2 M }\Big\|\sum_{i=1}^n a_i e_i\Big\|.
	\]
	Thus, $(f_i)$ is equivalent to $(e_i)$. The result follows from Zippin's theorem if we show that $(e_i)$ is unconditional. This is true because $(x_i)$, being semi-normalized and unconditional, is equivalent to its normalization $(z_i)$. Hence, $(z_i)$ is unconditional and so also is its spreading model $(e_i)$.
\end{proof}

\begin{remark}
	In their paper \cite{OdeSch3}, Odell and Schlumprecht constructed a space with unconditional basis which has no $c_0$ or $\ell_p$ spreading model. Consequently, Theorem \ref{Thrm:asymptoticzippin} shows that one cannot do better than our result in Theorem \ref{Thrm:newstab}. There is no Ramsey-theoretic stabilization for a general Banach space $X$ that will guarantee a semi-normalized basic sequence all of whose NCCB basic sequences are good.
\end{remark}

\begin{remark}
  In Theorem \ref{Thrm:asymptoticzippin}, the assumption of unconditionality is necessary. To see this, consider the James space of \cite{Jam} which has no unconditional basis. While the standard unit vector basis $(e_i)$ for this space is not spreading, the basis $(s_k)$ where $s_k=\sum_{i=1}^k e_i$ is $1$-spreading. In fact, every NCCB basic sequence of $(s_k)$ is $1$-spreading. Thus, all NCCB basic sequences are trivially good and generate spreading models. These spreading models are equivalent to $(s_k)$ and not the unit vector bases of $c_0$ or $\ell_p$. Therefore, the conclusion of Theorem \ref{Thrm:asymptoticzippin} fails.
\end{remark}

\subsection*{Acknowledgements}
The results of this article were obtained during the preparation of the author's PhD dissertation. This work was undertaking at the University of North Texas under the supervision of B\"unyamin Sar\i\,\,to whom the author would like to give thanks for many helpful conversations.


\normalsize

\end{document}